\crefname{thm}{theorem}{theorems}
\crefname{lemma}{lemma}{lemmas}
\crefname{prop}{proposition}{propositions}
\crefname{assumption}{assumption}{assumptions}
\crefname{example}{examples}{examples}
\declaretheorem[name=Theorem]{thm}
\declaretheorem[name=Proposition, sibling=thm]{prop}
\declaretheorem[name=Lemma, sibling=thm]{lemma}
\crefname{setting}{setting}{settings}
\DeclareMathOperator*{\argmin}{argmin}
\DeclareMathOperator*{\rank}{rank}
\newcommand{\E}{\mathbb{E}}
\renewcommand{\epsilon}{\varepsilon}
\newcommand{\simplex}{\Lambda^M}
\newcommand{\design}{\mathbf X}
\newcommand{\proba}[1]{\mathbb{P}\left( #1 \right)}
\newcommand{\R}{\mathbf{R}}
\newcommand{\RM}{\R^M}
\newcommand{\Rn}{\R^n}
\newcommand{\munorms}[1]{\Vert #1 \Vert^2_{L_2(\mu)} }
\newcommand{\musupnorm}[1]{\Vert #1 \Vert_{L_\infty(\mu)} }
\newcommand{\euclidnorms}[1]{ | #1 |_2^2}
\newcommand{\euclidnorm}[1]{ | #1 |_2}
\newcommand{\onenorm}[1]{ | #1 |_1}
\newcommand{\hf}{{\boldsymbol{\hat f}}}
\newcommand{\vf}{{\boldsymbol{f}}}
\newcommand{\vmu}{{\boldsymbol{\mu}}}
\newcommand{\vtheta}{{\boldsymbol{\theta}}}
\newcommand{\vbeta}{{\boldsymbol{\beta}}}
\newcommand{\hbeta}{{\boldsymbol{\hat \beta}}}
\newcommand{\vv}{{\boldsymbol{v}}}
\newcommand{\ve}{{\boldsymbol{e}}}
\newcommand{\vzero}{{\boldsymbol{0}}}
\newcommand{\vx}{{\boldsymbol{x}}}
\newcommand{\htheta}{{\boldsymbol{\hat \theta}}}
\newcommand{\vu}{{\boldsymbol{u}}}
\newcommand{\vomega}{{\boldsymbol{\omega}}}
\newcommand{\vy}{\mathbf{y}}
\newcommand{\vxi}{{\boldsymbol{\xi}}}
\newcommand{\vg}{{\boldsymbol{g}}}
\newcommand{\that}{{\boldsymbol{\hat\theta}}}
\date{\today}
\begin{document}

\title{Localized Gaussian width of $M$-convex hulls with applications to Lasso and convex aggregation}
\author{Pierre C. Bellec}
\address{Rutgers University, Department of Statistics and Biostatistics}

\begin{abstract}
    Upper and lower bounds are derived
    for the Gaussian mean width of the intersection
    of a convex hull of $M$ points with an Euclidean ball of a given radius.
    The upper bound holds for any collection of extreme point bounded in Euclidean norm.
    The upper bound and the lower bound match up to a multiplicative constant whenever the extreme points satisfy a one sided Restricted Isometry Property.

    This bound is then applied to study the Lasso estimator in fixed-design regression,
    the Empirical Risk Minimizer in the anisotropic persistence problem,
    and the convex aggregation problem in density estimation.
\end{abstract}

\maketitle



\section{Introduction}\label{introduction}

Let $T$ be a subset of $\Rn$.
The Gaussian width of $T$ is defined as
\begin{equation}
    \ell(T) \coloneqq  \E \sup_{\vu\in T} \vu^T \vg, 
\end{equation}
where $\vg = (g_1,...,g_n)^T$ and $g_1,...,g_n$ are i.i.d. standard normal random variables.
For any vector $\vu\in\Rn$, denote by $|\vu|_2$ its Euclidean norm and define
the Euclidean balls
\begin{equation}
    B_2 = \{ \vu\in\Rn: \euclidnorm{\vu}\le 1 \},
    \qquad
    s B_2 = \{ s \vu\in\Rn,\; \vu\in B_2  \}
    \text{ for all } s\ge0.
\end{equation}
We will also use the notation $S^{n-1} = \{\vu\in\R^n:\euclidnorm{\vu}=1 \}$.
The localized Gaussian width of $T$ with radius $s>0$ is the quantity
$\ell(T\cap sB_2)$.
For any $\vu\in\R^p$, define the $\ell_p$ norm by $|\vu|_p=(\sum_{i=1}^n |u_i|^p )^{1/p}$ for any $p\ge1$,
and let $|\vu|_0$ be the number of nonzero coefficients of $\vu$.

This paper studies the localized Gaussian width
\begin{equation}
    \ell(sB_2\cap T),
\end{equation}
where $T$ is the convex hull of $M$ points in $\R^n$.

If $T=B_1=\{\vu\in\R^n: \onenorm{\vu}\le 1 \}$,
then matching upper and lower bounds are available for 
the localized Gaussian width:
\begin{equation}
    \ell(sB_2\cap B_1)
    \asymp
        \sqrt{  \log\left(e n (s^2 \wedge 1) \right) }
    \wedge
    \left(
        s \sqrt{n}
    \right),
    \label{eq:diagonal-case-mendelson}
\end{equation}
cf. \cite{gordon2007gaussian} and \cite[Section 4.1]{lecue2013learning}.
In the above display, $a\asymp b$ means that $a\le C b$ and $b\le C a$
for some large enough numerical constant $C\ge 1$.

The first goal of this paper is to generalize this bound
to any $T$ that is the convex hull of $M\ge1$ points
in $\R^n$.

\paragraph{Contributions.}

\Cref{s:expected-sup} is devoted to the generalization of \eqref{eq:diagonal-case-mendelson} and provides sharp bounds on the localized Gaussian width of the convex hull of $M$ points in $\R^n$, see \Cref{prop:gw-simple,prop:lower-bound-RIP} below.
\Cref{s:applications-fixed-design,s:persistence,s:bounded} provide
statistical applications of the results of \Cref{s:expected-sup}.
\Cref{s:applications-fixed-design} studies 
the Lasso estimator and the convex aggregation problem in
fixed-design regression.
In \Cref{s:persistence}, we show that Empirical Risk Minimization achieves the minimax rate for the persistence problem
in the anisotropic setting.
Finally, \Cref{s:bounded} provides results for bounded empirical processes
and for the convex aggregation problem in density estimation.

\section{Localized Gaussian width of a $M$-convex hull
    \label{s:expected-sup}
}

The first contribution of the present paper is the following upper bound
on localized Gaussian width of the convex hull of $M$ points in $\Rn$.
\begin{prop}
    \label{prop:gw-simple}
    Let $n\ge1$ and $M\ge2$.
    Let $T$ be the convex hull of $M$ points in $\Rn$ and assume that $T\subset B_2$.
    Let $\vg$ be a centered Gaussian random variable with covariance matrix $I_{n\times n}$.
    Then for all $s>0$,
    \begin{equation}
        \ell(T \cap s B_2 ) 
        \le  
        \left(
            4 \sqrt{  \log_+\left( 4e M (s^2 \wedge 1) \right) }
        \right)
        \wedge
        \left(
            s \sqrt{n \wedge M}
        \right)
        \label{eq:full-upper-bound}
    \end{equation}
    where $\log_+(a) = \max(1, \log a)$.
\end{prop}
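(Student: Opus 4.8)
The plan is to prove the two terms of the minimum separately and then take the smaller one. The dimension bound $s\sqrt{n\wedge M}$ is immediate. Since $T\cap sB_2\subseteq sB_2$, we have $\ell(T\cap sB_2)\le \ell(sB_2)=s\,\E\euclidnorm{\vg}\le s\sqrt n$ by Jensen. On the other hand, $T$ lies in the linear span $V$ of the $M$ extreme points, which has dimension at most $M$; since $\langle\vu,\vg\rangle=\langle\vu,P_V\vg\rangle$ for $\vu\in V$, we get $\ell(T\cap sB_2)\le s\,\E\euclidnorm{P_V\vg}\le s\sqrt{\dim V}\le s\sqrt M$. Combining the two gives $s\sqrt{n\wedge M}$.

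For the logarithmic term I would separate $s\ge1$ from $s<1$. When $s\ge1$ we have $T\subseteq B_2\subseteq sB_2$, so $T\cap sB_2=T$, and the supremum of the linear form $\langle\,\cdot\,,\vg\rangle$ over the polytope $T$ is attained at a vertex. Hence $\ell(T)=\E\max_{1\le i\le M}\langle f_i,\vg\rangle\le\sqrt{2\log M}$ by the Gaussian maximal inequality, using that each $\langle f_i,\vg\rangle$ is centered with variance $\euclidnorms{f_i}\le1$. Since $s^2\wedge1=1$ here, a short verification of constants yields $\sqrt{2\log M}\le 4\sqrt{\log_+(4eM)}$.

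The substance is the regime $s<1$, where the localization must genuinely be exploited. Here I would discretize $T$ by Maurey's empirical method: fixing $k\asymp 1/s^2$, for any $\vu=\sum_i\lambda_i f_i\in T\cap sB_2$, drawing $k$ of the $f_i$ according to $(\lambda_i)_i$ and averaging produces a $k$-fold average $\vu_k$ with $\euclidnorm{\vu-\vu_k}\le 1/\sqrt k\lesssim s$, so that $\euclidnorm{\vu_k}\le\euclidnorm{\vu}+1/\sqrt k\lesssim s$ as well. There are at most $\binom{M+k-1}{k}$ such averages, and the ones relevant to us all lie in a ball of radius $O(s)$. Applying the Gaussian maximal inequality to this finite set, with the crucial variance proxy $O(s^2)$ provided by the localization, gives a main term of order $\sqrt{s^2\log\binom{M+k-1}{k}}\asymp\sqrt{s^2\cdot k\log(eMs^2)}\asymp\sqrt{\log(eMs^2)}$, matching the target up to constants.

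The main obstacle is the approximation remainder $\langle\vu-\vu_k,\vg\rangle$: the crude estimate $\euclidnorm{\vu-\vu_k}\,\euclidnorm{\vg}\sim s\sqrt n$ is lossy in exactly the regime where the logarithmic bound is meant to bind. I would control it by iterating the Maurey approximation across dyadically decreasing radii, i.e.\ by a chaining argument: the remainder at each scale lies in a rescaled difference body $\mathrm{conv}(\{f_i\})-\mathrm{conv}(\{f_i\})$ intersected with a smaller ball, to which the same finite-set estimate applies with a correspondingly smaller variance proxy. Summing the resulting geometric series of contributions, each of order $\sqrt{\log(eMs^2)}$ with geometrically decaying prefactors, keeps the total within the stated constant $4$, while the factor $4e$ inside the logarithm absorbs the integer rounding in the choice of $k$ and the constants from $\binom{M+k-1}{k}\le(e(M+k-1)/k)^k$. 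Taking the minimum with the dimension bound of the first paragraph then completes the proof.
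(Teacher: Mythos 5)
Your first two paragraphs agree with the paper: the $s\sqrt{n\wedge M}$ term via projection onto the span plus Cauchy--Schwarz is exactly the paper's proof of \eqref{eq:bound-easy-cs}, and the identification of Maurey discretization at sparsity $k\asymp 1/s^2$, the count $\binom{M+k-1}{k}$, and the localization-induced variance proxy $O(s^2)$ as the source of the main term are also the paper's ingredients. The gap is in your final step, and it is fatal as written: the chaining does not produce a geometric series. To drive the Maurey remainder at scale $j$ down to $2^{-j}s$ you must take sparsity $k_j\asymp 2^{2j}/s^2$, so the net of sparse averages in the difference body has log-cardinality of order $k_j\log\bigl(eM^2/k_j\bigr)\asymp 2^{2j}s^{-2}\bigl(\log(eM^2s^2)-2j\log 2\bigr)$, while the variance proxy at that scale is $(2^{-j}s)^2$. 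The finite-set estimate at scale $j$ therefore contributes
\begin{equation}
    \sqrt{(2^{-j}s)^2\,\log N_j}\;\asymp\;\sqrt{\log(eM^2s^2)-2j\log 2}:
\end{equation}
the growth of the entropy exactly cancels the decay of the radius, so there is no geometrically decaying prefactor. Summing over the $\asymp\log(eMs^2)$ scales before the entropy estimate degenerates gives a bound of order $\bigl(\log(eMs^2)\bigr)^{3/2}$, not $\bigl(\log(eMs^2)\bigr)^{1/2}$ (take $s$ constant and $M$ large to see the discrepancy cleanly). This is precisely the well-known suboptimality of uniform dyadic (Dudley-type) chaining for localized convex hulls; it cannot be fixed by tuning constants, only by non-uniform (generic) chaining, which your construction does not supply.

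The paper's proof avoids the remainder altogether, and that is the idea your proposal is missing. For each fixed $\vg$ the functional $F(\vtheta)=\vg^T\vmu_\vtheta$ is linear, hence convex, and the supremum over the compact set $\{\vtheta\in\simplex:\,Q(\vtheta)\le t^2\}$ is attained at some $\bar\vtheta$. One applies Maurey's randomization to $\bar\vtheta$ only and uses Jensen's inequality, $F(\bar\vtheta)=F(\E_\Theta\hat\vtheta)\le\E_\Theta F(\hat\vtheta)$, so no approximation error is ever formed. The only defect is that the random sparse vector $\hat\vtheta$ need not satisfy the constraint $Q(\hat\vtheta)\le t^2$; this is repaired by noting $\E_\Theta Q(\hat\vtheta)\le t^2+R^2/m$ and invoking Markov's inequality plus stochastic dominance, which yields the relaxed discrete maximization integrated against $dx/x^2$, i.e.\ \Cref{lemma:maureysup}. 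The Gaussian maximal inequality over $\simplex_m$ is then applied once inside that convergent integral ($\int_1^{+\infty}x^{-3/2}\,dx<\infty$), giving the stated $4\sqrt{\log_+(4eM(s^2\wedge 1))}$. If you replace your chaining step by this Jensen/stochastic-dominance argument, the rest of your outline goes through.
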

\Cref{prop:gw-simple} is proved in the next two subsections.
Inequality 
\begin{equation}
    \ell(T \cap s B_2 ) \le s\sqrt{n\wedge M}
    \label{eq:bound-easy-cs}
\end{equation} is a direct consequence of the Cauchy-Schwarz inequality
and $\E\euclidnorm{P \vg }\le \sqrt{d}$ where $P\in\R^{n\times n}$
is the orthogonal projection onto the linear span of $T$ and $d\le(n \wedge M)$ is the rank of $P$.
The novelty of \eqref{eq:full-upper-bound} is inequality
\begin{equation}
    \ell(T \cap s B_2 )
    \le  4 \sqrt{  \log_+\left( 4 eM (s^2\wedge1) \right) }
    .
    \label{eq:gaussian-width-s}
\end{equation}
Inequality \eqref{eq:gaussian-width-s} was known for the $\ell_1$-ball $T=\{\vu\in\R^n: \onenorm{\vu}\le 1 \}$ \cite{gordon2007gaussian},
but to our knowledge \eqref{eq:gaussian-width-s} is new for general $M$-convex hulls.
If $T$ is the $\ell_1$-ball, then the bound \eqref{eq:full-upper-bound}
is sharp up to numerical constants \cite{gordon2007gaussian}, \cite[Section 4.1]{lecue2013learning}.

The above result does not assume any type of Restricted Isometry Property (RIP).
The following proposition shows that \eqref{eq:gaussian-width-s}
is essentially sharp provided that the vertices of $T$ satisfies a one-sided RIP
of order $2/s^2$.

\begin{prop}
    \label{prop:lower-bound-RIP}
    Let $n\ge1$ and $M\ge2$.
    Let $\vg$ be a centered Gaussian random variable with covariance matrix $I_{n\times n}$.
    Let $s\in(0,1]$ and assume for simplicity that $m=1/s^2$ is a positive integer
    such that $m \le M/5$.
    Let $T$ be the convex hull of the $2M$ points $\{\pm\vmu_1,...,\pm\vmu_M\}$ where
    $\vmu_1,...,\vmu_M\in S^{n-1}$.
    Assume that
    for some real number $\kappa\in(0,1)$ we have
    \begin{equation}
         \kappa |\vtheta|_2 \le |\vmu_\vtheta|_2
         \qquad
         \text{ for all }
         \vtheta\in\R^M
         \text{ such that }
        |\vtheta|_0\le 2m,
    \end{equation}
    where
    $\vmu_\vtheta= \sum_{j=1}^M \theta_j\vmu_j$.
    Then
    \begin{equation}
        \ell(T \cap s B_2 ) 
        \ge (\sqrt 2/4) \kappa \sqrt{ \log\left(\frac{Ms^2}{5}\right)}
        .
        \label{eq:lower-bound-RIP}
    \end{equation}
\end{prop}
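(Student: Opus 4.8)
The plan is to bound $\ell(T\cap sB_2)$ from below by the expected maximum of $\vmu_\vtheta^T\vg$ over a carefully chosen finite family of coefficient vectors $\vtheta$, and then to invoke a Gaussian maximal (Sudakov-type) inequality. The RIP hypothesis will enter only through the separation of the points, while the ball constraint $\euclidnorm{\vmu_\vtheta}\le s$, which cannot be read off from $\vtheta$ because the RIP controls norms only from below, will be enforced by a second-moment computation.

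First I would fix the combinatorial skeleton by choosing the supports as a constant-weight code: subsets $S_1,\dots,S_K\subseteq\{1,\dots,M\}$ with $|S_i|=m$ and $|S_i\cap S_j|\le m/2$ for $i\ne j$, which a Gilbert--Varshamov / greedy argument produces with $\log K\ge m\log\!\big(M/(5m)\big)$ (this is where the constant $5$ and the hypothesis $m\le M/5$ enter). For each support I would then select the signs. For i.i.d. Rademacher $\eta_j$ and $\vtheta$ equal to $\eta_j/(m\sqrt2)$ on $S_i$ and $0$ elsewhere, the identity $\euclidnorm{\vmu_j}=1$ together with the vanishing of the cross terms gives $\E_\eta\euclidnorms{\vmu_\vtheta}=\tfrac1{2m^2}\sum_{j\in S_i}\euclidnorms{\vmu_j}=\tfrac1{2m}=s^2/2$, so by Markov at least half of the sign patterns satisfy $\euclidnorms{\vmu_\vtheta}\le s^2$. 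Fixing one such pattern per support yields $\vtheta_i$ with $\onenorm{\vtheta_i}=1/\sqrt2\le 1$ (hence $\vmu_{\vtheta_i}\in T$) and $\euclidnorm{\vmu_{\vtheta_i}}\le s$ (hence $\vmu_{\vtheta_i}\in sB_2$); the $\sqrt2$ introduced here is the source of the $\sqrt2$ in the final constant.

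Next I would establish separation. For $i\ne j$ the difference $\vtheta_i-\vtheta_j$ is supported on $S_i\cup S_j$, hence is $2m$-sparse, so the RIP applies and gives $\euclidnorm{\vmu_{\vtheta_i}-\vmu_{\vtheta_j}}=\euclidnorm{\vmu_{\vtheta_i-\vtheta_j}}\ge\kappa\euclidnorm{\vtheta_i-\vtheta_j}$. Since the supports differ in $|S_i\triangle S_j|\ge m$ coordinates, each carrying an entry of magnitude $1/(m\sqrt2)$, one gets $\euclidnorms{\vtheta_i-\vtheta_j}\ge m/(2m^2)=s^2/2$, so the points are $(\kappa s/\sqrt2)$-separated in $\Rn$. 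Since $\{\vmu_{\vtheta_i}\}_{i=1}^K\subseteq T\cap sB_2$ and the centered Gaussian variables $\vmu_{\vtheta_i}^T\vg$ satisfy $\E(\vmu_{\vtheta_i}^T\vg-\vmu_{\vtheta_j}^T\vg)^2=\euclidnorms{\vmu_{\vtheta_i}-\vmu_{\vtheta_j}}\ge\kappa^2 s^2/2$, a Sudakov-type maximal inequality gives $\ell(T\cap sB_2)\ge\E\max_i \vmu_{\vtheta_i}^T\vg\ge c\,(\kappa s/\sqrt2)\sqrt{\log K}$ for a numerical constant $c$; substituting $\log K\ge m\log\!\big(M/(5m)\big)$ and $s^2=1/m$ collapses the factor $s\sqrt m=1$ and leaves $\kappa\sqrt{\log(Ms^2/5)}$ up to a constant, the value $\sqrt2/4$ coming from a self-contained version of the inequality (with $c=1/2$).

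The main obstacle I anticipate is exactly the tension at the heart of the construction: because only a one-sided RIP is available, membership in the ball must be imposed externally, and the second-moment/Markov device that achieves this forces the $1/\sqrt2$ rescaling and ties together three requirements on a single family --- ball-membership, $(\kappa s/\sqrt2)$-separation, and cardinality $\log K\approx m\log(M/(5m))$. Arranging these to coexist, and then propagating the numerical constant cleanly through the maximal inequality, is the delicate part; note also that one may equivalently run the argument through Sudakov--Fernique, comparing $\vtheta\mapsto\vmu_\vtheta^T\vg$ with $\kappa\langle\vtheta,\cdot\rangle$ via the RIP, which makes the role of $\kappa$ transparent but leaves the same packing-in-the-ball issue to resolve.
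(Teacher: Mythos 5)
Your construction is essentially the paper's own: a Varshamov--Gilbert family of $m$-sparse supports, random signs to force membership in $sB_2$, the one-sided RIP for separation, and Sudakov minoration. However, there is a genuine numerical error in the combinatorial step. You claim a constant-weight code with pairwise intersections at most $m/2$ and $\log K \ge m\log\bigl(M/(5m)\bigr)$. This is false: the correct Gilbert--Varshamov-type bound (and the one the paper invokes, \cite[Lemma 2.5]{giraud2014introduction}) has exponent $m/2$, not $m$, i.e.\ $\log K \ge (m/2)\log\bigl(M/(5m)\bigr)$. In fact no code of your claimed size can exist: since two codewords share at most $m/2$ indices, every subset of $\{1,\dots,M\}$ of cardinality $\lfloor m/2\rfloor+1$ is contained in at most one codeword, whence
\begin{equation}
    K \,\le\, \binom{M}{\lfloor m/2\rfloor +1} \Big/ \binom{m}{\lfloor m/2\rfloor+1},
    \qquad\text{so}\qquad
    \log K \,\le\, \left(\tfrac m2 +1\right)\log\left(\tfrac{2eM}{m}\right),
\end{equation}
which is strictly smaller than $m\log\bigl(M/(5m)\bigr)$ once $m\ge 3$ and $M/m$ is large enough. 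With the correct exponent $m/2$, your chain of inequalities only yields the constant $\kappa/4$ rather than the stated $(\sqrt 2/4)\kappa$, because you also gave away a factor $\sqrt2$ in the separation through the rescaling of the coefficients by $1/\sqrt 2$. As written, your proof reaches the stated constant only because the false factor of two in the code size happens to cancel this $\sqrt2$ loss.

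The repair is exactly to remove that rescaling, which is what the paper does. Take coefficients $\pm 1/m$ on each support $S_i$, so that the $\ell_1$ norm of $\vtheta_i$ is exactly $1$ and $\vmu_{\vtheta_i}\in T$. Then the Rademacher averaging gives $\E_\eta \euclidnorms{\vmu_{\vtheta_i}} = (1/m^2)\sum_{j\in S_i}\euclidnorms{\vmu_j} = 1/m = s^2$ exactly, so a sign pattern with $\euclidnorm{\vmu_{\vtheta_i}}\le s$ exists immediately from the mean, with no Markov ``at least half'' bookkeeping needed (you only require one good pattern per support). The symmetric-difference argument then gives separation $\kappa s$ instead of $\kappa s/\sqrt2$, and Sudakov with constant $1/2$ and the correct code size yields
\begin{equation}
    \ell(T\cap sB_2) \ge \tfrac12\,\kappa s\,\sqrt{(m/2)\log\bigl(M/(5m)\bigr)}
    = \tfrac{\sqrt2}{4}\,\kappa\,\sqrt{\log\bigl(Ms^2/5\bigr)},
\end{equation}
using $s\sqrt m =1$, which is the claimed bound. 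All other steps of your argument (the $2m$-sparsity of the differences, the use of the one-sided RIP only for separation, and the passage to the Gaussian maximal inequality) are correct and coincide with the paper's proof.
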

The proof of \Cref{prop:lower-bound-RIP} is given in \Cref{s:proof-lower-RIP}.

\subsection{A refinement of Maurey's argument}
This subsection provides the main tool to derive
the upper bound \eqref{eq:gaussian-width-s}.
Define the simplex in $\R^M$ by
\begin{equation}
    \simplex = \Big\{
        \vtheta \in \RM, 
        \quad \sum_{j=1}^M \theta_j = 1,
        \quad \forall j=1\dots M, \;\; \theta_j \ge 0
    \Big\}.
    \label{eq:def-simplex}
\end{equation}
Let $m\ge 1$ be an integer,
and let
\begin{equation}
Q(\vtheta)= \vtheta^T \Sigma \vtheta
,
\end{equation}
where
$\Sigma = (\Sigma_{jj'})_{j,j'=1,...,M}$
is a positive semi-definite matrix of size $M$.
Let $\bar\vtheta\in\simplex$ be a deterministic vector
such that $Q(\bar\vtheta)$ is small.
Maurey's argument \cite{pisier1980remarques}
has been used extensively to prove the existence
of a sparse vector $\tilde\vtheta\in\simplex$
such that $Q(\tilde\vtheta)$ is of the same order as that of $Q(\bar\vtheta)$.
Maurey's argument uses the probabilistic method to prove the existence of such $\tilde\vtheta$.
A sketch of this argument is as follows.

Define
the discrete set $\simplex_m$ as
\begin{equation}
    \label{eq:cgrid}
    \simplex_m \coloneqq \left\{ \frac{1}{m}\sum_{k=1}^m u_k,\;\; \vu_1,..., \vu_m\in \{\ve_1,...,\ve_M\} \right\},
\end{equation}
where $(\ve_1,...,\ve_M)$ is the canonical basis in $\RM$.
The discrete set $\simplex_m$ is a subset of the simplex $\simplex$
that contains only $m$-sparse vectors.

Let $(\ve_1,...,\ve_M)$ be the canonical basis in $\RM$.
Let $\Theta_1,...,\Theta_m$ be i.i.d. random variables 
valued in $\{\ve_1,...,\ve_M\}$ with distribution
\begin{equation}
    \proba{\Theta_k=\ve_j}=\bar\theta_j
    \qquad
    \text{ for all }k=1,...,m.
    \label{eq:def-Theta_i}
\end{equation}
Next, consider the random variable 
\begin{equation}
    \that = \frac{1}{m}\sum_{k=1}^m \Theta_k.
    \label{eq:def-that-maurey}
\end{equation}
The random variable $\hat\vtheta$ is valued in $\simplex_m$
and is such that $\E_{\Theta}[ \hat\vtheta ]= \bar\vtheta$, where $\E_\Theta$ denotes the expectation with respect to $\hat\vtheta$.
Then a bias-variance decomposition yields
\begin{equation}
    \label{eq:control-Q}
    \E_\Theta[\hat\vtheta] \le Q(\bar\vtheta) + R^2 / m,
\end{equation}
where $R>0$ is a constant such that $\max_{j=1,...,M}\Sigma_{jj} \le R^2$.
As $\min_{\vtheta\in\simplex_m} Q(\vtheta) \le \E_\Theta[\hat\vtheta]$,
this yields the existence of $\tilde\vtheta\in\simplex_m$ such that
\begin{equation}
    Q(\tilde\vtheta) \le Q(\bar\vtheta) + R^2 /m.
\end{equation}
If $m$ is chosen large enough, the two terms $Q(\bar\vtheta)$ and $R^2/m$ are of the same order and we have established the existence of an $m$-sparse vector $\tilde\vtheta$ so that $Q(\tilde\vtheta)$ is not much
substantially larger than $Q(\bar\vtheta)$.

For our purpose, we need to refine this argument
by controlling the deviation of the
random variable $Q(\that)$.
This is done in \Cref{lemma:maureysup} below.

\begin{lemma}
    \label{lemma:maureysup}
    Let $m\ge 1$ and define $\simplex_m$ by \eqref{eq:cgrid}.
    Let $F:\RM\rightarrow [0,+\infty)$ be a convex function.
    For all $\vtheta\in\RM$,  let
    \begin{equation}
    Q(\vtheta)= \vtheta^T \Sigma \vtheta
    ,
    \end{equation}
    where
    $\Sigma = (\Sigma_{jj'})_{j,j'=1,...,M}$ is a positive semi-definite matrix of size $M$.
    Assume that the diagonal elements of $\Sigma$ satisfy
    $\Sigma_{jj} \le R^2$   for all $j=1,...,M$.
    Then for all $t > 0$,
    \begin{equation}
        \sup_{\vtheta\in\simplex: Q(\vtheta)\le t^2}
        F(\vtheta)
        \le
        \int_1^{+\infty}
        \left[
            \max_{\vtheta\in\simplex_m:\; Q(\vtheta) \le x(t^2+ R^2/m)}
            F(\vtheta)
        \right] \frac{dx}{x^2}.
        \label{eq:maureysup}
    \end{equation}
\end{lemma}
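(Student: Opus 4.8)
The plan is to bound $F(\bar\vtheta)$ for an arbitrary feasible point $\bar\vtheta\in\simplex$ with $Q(\bar\vtheta)\le t^2$ and only take the supremum over such $\bar\vtheta$ at the very end, since the right-hand side of \eqref{eq:maureysup} does not depend on $\bar\vtheta$. First I would run Maurey's sampling scheme \eqref{eq:def-Theta_i}--\eqref{eq:def-that-maurey} attached to this particular $\bar\vtheta$: draw $\Theta_1,\dots,\Theta_m$ i.i.d. in $\{\ve_1,\dots,\ve_M\}$ with $\proba{\Theta_k=\ve_j}=\bar\theta_j$ and set $\that=\tfrac1m\sum_{k=1}^m\Theta_k\in\simplex_m$, so that $\E_\Theta[\that]=\bar\vtheta$. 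Because $F$ is convex and $\that$ takes only finitely many values, Jensen's inequality gives $F(\bar\vtheta)=F(\E_\Theta[\that])\le\E_\Theta[F(\that)]$.

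Next I would control the random quadratic form $Q(\that)$ in expectation. Expanding $Q(\that)=\tfrac1{m^2}\sum_{k,l}\Theta_k^T\Sigma\Theta_l$, using independence for the $m(m-1)$ off-diagonal terms (each contributing $\bar\vtheta^T\Sigma\bar\vtheta=Q(\bar\vtheta)$) and the bound $\Theta_k^T\Sigma\Theta_k\le\max_j\Sigma_{jj}\le R^2$ for the $m$ diagonal terms, I obtain the bias--variance estimate $\E_\Theta[Q(\that)]\le Q(\bar\vtheta)+R^2/m\le t^2+R^2/m=:\tau^2$, which is the refinement of \eqref{eq:control-Q} we need.

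Now let $G(y):=\max_{\vtheta\in\simplex_m:\,Q(\vtheta)\le y}F(\vtheta)$, a nondecreasing, bounded step function (the maximum is over the finite set $\simplex_m$). Since $\that\in\simplex_m$, by definition $F(\that)\le G(Q(\that))$, so $\E_\Theta[F(\that)]\le\E_\Theta[G(Q(\that))]$. The crux is then a one-dimensional comparison: writing $Y=Q(\that)\ge0$ with $\E_\Theta Y\le\tau^2$, I claim $\E_\Theta[G(Y)]\le\int_1^\infty G(\tau^2x)\,\tfrac{dx}{x^2}$, which is exactly the right-hand side of \eqref{eq:maureysup} once one recognizes $\tau^2x=x(t^2+R^2/m)$. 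I would prove this by stochastic dominance. The measure $x^{-2}\,dx$ on $[1,\infty)$ is a probability measure with survival function $\min(1,1/\cdot)$; letting $W$ be $\tau^2$ times a variable with this law gives $\proba{W>y}=\min(1,\tau^2/y)$. By Markov's inequality $\proba{Y>y}\le\E_\Theta Y/y\le\tau^2/y$, and trivially $\proba{Y>y}\le1$, so $\proba{Y>y}\le\min(1,\tau^2/y)=\proba{W>y}$ for every $y>0$; thus $Y$ is stochastically dominated by $W$, and since $G$ is nondecreasing, $\E_\Theta[G(Y)]\le\E[G(W)]=\int_1^\infty G(\tau^2x)\,\tfrac{dx}{x^2}$ (equivalently, expand $G(y)=G(0)+\int_{(0,\infty)}\ind{y\ge a}\,dG(a)$ and integrate the Markov bound against the nonnegative measure $dG$). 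Chaining the displayed inequalities yields $F(\bar\vtheta)\le\int_1^\infty G(\tau^2x)\,\tfrac{dx}{x^2}$, and taking the supremum over feasible $\bar\vtheta$ finishes the proof.

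The main obstacle I anticipate is this final one-dimensional step, and it is precisely where the refinement over the classical Maurey argument lives: one cannot simply discard the event $\{Q(\that)>\tau^2\}$, and the weight $x^{-2}\,dx$ is engineered so that the Markov tail $\tau^2/y$ integrates correctly against the jumps of the discrete maximum $G$. Getting this bookkeeping right---rather than the Maurey sampling or the Jensen step, both of which are routine---is the delicate part of the argument.
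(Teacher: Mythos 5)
Your proposal is correct and follows essentially the same route as the paper's proof: Maurey sampling of $\that\in\simplex_m$ from $\bar\vtheta$, Jensen's inequality via convexity of $F$, the bias--variance bound $\E_\Theta[Q(\that)]\le t^2+R^2/m$, and then Markov's inequality combined with stochastic dominance against the law with density $x^{-2}$ on $[1,+\infty)$. The only differences are cosmetic: you take an arbitrary feasible $\bar\vtheta$ and pass to the supremum at the end (avoiding the paper's compactness/attainment step), and you compare $Q(\that)$ directly to $\tau^2=t^2+R^2/m$ rather than normalizing by its exact expectation $E$ as the paper does.
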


In the next sections, it will be useful to bound from above
the quantity 
$F(\vtheta)$ maximized over $\simplex$ subject to the constraint $Q(\vtheta)\le t^2$.
An interpretation of \eqref{eq:maureysup} is as follows.
Consider the two optimization problems
\begin{align}
    \text{maximize }
    F(\vtheta)
    &&\text{for }\vtheta\in\simplex 
    &&\text{subject to } Q(\vtheta)\le t^2,
\\
    \text{maximize }
    F(\vtheta)
    &&\text{for }\vtheta\in\simplex_m
    &&\text{subject to } Q(\vtheta)\le Y(t^2+R^2/m),
\end{align}
for some $Y\ge1$.
\Cref{eq:maureysup} says that
the optimal value of
the first optimization problem
is smaller than
the optimal value of the second optimization problem
averaged over the distribution of $Y$ given by the density
$y\mapsto 1/y^2$ on $[1,+\infty)$.
The second optimization problem above is
over the \emph{discrete} set $\simplex_m$ with 
the relaxed constraint $Q(\vtheta)\le Y(t^2+R^2/m)$,
hence we have relaxed the constraint in exchange for discreteness.
The discreteness of the set $\simplex_m$ will be used in the next subsection
for the proof of \Cref{prop:gw-simple}.

\begin{proof}[Proof of \Cref{lemma:maureysup}]
    The set $\{\vtheta\in\simplex: Q(\vtheta) \le t^2\}$ is compact.
    The function $F$ is convex with domain $\R^M$ and thus continuous.
    Hence the supremum in the left hand side of \eqref{eq:maureysup}
    is achieved at some
    $\bar \vtheta\in\simplex$  such that
    $Q(\bar\vtheta) \le t^2$.
    Let $\Theta_1,...,\Theta_m,\that$ be the random variable defined in
    \eqref{eq:def-Theta_i} and \eqref{eq:def-that-maurey} above.
    Denote by $E_\Theta$ the expectation with respect to $\Theta_1,...,\Theta_m$.
    By definition, $\that\in\simplex_m$ and $\E_\Theta \that = \bar\vtheta$. 
    Let $E = \E_\Theta[ Q(\that)]$.
    A bias-variance decomposition and the independence of $\Theta_1,...,\Theta_m$ yield
    \begin{align}
        E \coloneqq \E_\Theta[ Q(\that)] 
        &= Q(\bar\vtheta) + \E_\Theta (\that-\bar\vtheta)^T\Sigma (\that -\bar\vtheta), \\
        &= Q(\bar\vtheta) + \frac{1}{m} \E_\Theta [ (\Theta_1-\bar\vtheta)^T\Sigma (\Theta_1-\bar\vtheta) ].
    \end{align}
    Another bias-variance decomposition yields
    \begin{equation}
        \E_\Theta (\Theta_1-\bar\vtheta)^T\Sigma (\Theta_1-\bar\vtheta)
        =
        \E_\Theta [ Q(\Theta_1) ]
        - Q(\bar\vtheta)
        \le
        \E_\Theta Q(\Theta_1)
        \le 
        R^2,
    \end{equation}
    where we used that $Q(\cdot)\ge0$ and that
$\Theta_1 \Sigma\Theta_1 \le R^2$ almost surely.
Thus 
\begin{equation}
    E = \E_\Theta [Q(\that)] \le Q(\bar\vtheta) + R^2/m \le t^2 + R^2/m.
    \label{eq:bound-E}
\end{equation}
Define the random variable $X=Q(\that)/E$, which is nonnegative and satisfifes $\E_\Theta[X] = 1$.
By Markov inequality, it holds that $\mathbb P_\Theta(X>t)\le 1/t = \int_1^{+\infty}(1/x^2)dx$.
Define the random variable $Y$ by the density function $x\rightarrow 1/x^2$ on $[1,+\infty)$.
Then we have $\mathbb P_\Theta(X>t) \le \mathbb P(Y>t)$ for any $t>0$,
so by stochastic dominance, there exists a rich enough probability space $\Omega$
and random variables $\tilde X$ and $\tilde Y$ defined on $\Omega$ such that
$\tilde X$ and $X$ have the same distribution,
$\tilde Y$ and $Y$ have the same distribution,
and $\tilde X \le \tilde Y$ almost surely on $\Omega$
(see for instance Theorem 7.1 in \cite{hollander2012probability}).
Denote by $\E_{\Omega}$ the expectation sign on the probability space $\Omega$.

By definition of $\bar\vtheta$ and $\that$,
using Jensen's inequality,
Fubini's Theorem and the fact that $\that\in\simplex_m$ we have
\begin{align}
    \sup_{\vtheta\in\simplex: Q(\vtheta)\le t^2}
    F(\vtheta)
    = F(\bar\vtheta)
    = F(\E_\Theta[\that])
    \le \E_\Theta [F(\that)]
    \le
        \E_\Theta[
        g(Q(\that)/E)
        ]
\end{align}
where $g(\cdot)$ is the nondecreasing function $g(x)= \max_{\vtheta\in\simplex_m: Q(\vtheta)\le x E} F(\vtheta)$.
The right hand side of the previous display is equal to to $\E_\Theta[g(X)]$.
Next, we use the random variables $\tilde X$ and $\tilde Y$ as follows:
\begin{equation}
    \E_\Theta[g(X)]
    = \E_\Omega[g(\tilde X)]
    \le
    \E_\Omega[g(\tilde Y)]
    = \int_1^{+\infty} \frac{g(x)}{x^2} dx.
\end{equation}
Combining the previous display and
\eqref{eq:bound-E}
completes the proof.
\end{proof}

\subsection{Proof of \eqref{eq:gaussian-width-s}}
We are now ready to prove \Cref{prop:gw-simple}.
The main ingredients are \Cref{lemma:maureysup}
and the following upper bound on the cardinal of $\simplex_m$
\begin{equation}
    \label{eq:bound-cardinal}
    \log |\simplex_m|
    =
    \log {M + m - 1 \choose m}
    \le
    \log {2M \choose m}
    \le m \log \left( \frac{2eM}{m} \right).
\end{equation}

\begin{proof}[Proof of \eqref{eq:gaussian-width-s}] 
    If $s^2<1/M$ then by \eqref{eq:bound-easy-cs} we have $\ell(T\cap s B_2)\le 1$, hence \eqref{eq:gaussian-width-s} holds.
    Thus it is enough to focus on the case $s^2\ge 1/M$.

    Let $r=\min(s,1)$ and set
    $m = \lfloor 1/r^2 \rfloor$, which satisfies $1\le m \le M$.
    As $T$ is the convex hull of $M$ points,
    let $\vmu_1,...,\vmu_M \in \Rn$ be such that
    \begin{equation}
        T = \text{convex hull of } \{\vmu_1,...,\vmu_M\} 
        = \{ \vmu_\vtheta, \vtheta\in\simplex \},
    \end{equation}
    where $\vmu_\vtheta = \sum_{j=1}^M \theta_j \vmu_j$ for $\vtheta\in\simplex$.
    
Let $Q(\vtheta) = |\vmu_\vtheta|_2^2$ for all $\vtheta\in\RM$.
This is a polynomial of order $2$, of the form $Q(\vtheta)=\vtheta^T \Sigma\vtheta$,
where $\Sigma$ is the Gram matrix with $\Sigma_{jk} = \vmu_k^T\vmu_j$
for all
$j,k=1,...,M$.
As we assume that $T\subset B_2$, the diagonal elements of $\Sigma$ satisfy $\Sigma_{jj} \le 1$.
For all $\vtheta\in\RM$, let $F(\vtheta) = \vg^T\vmu_\vtheta$.
Applying \Cref{lemma:maureysup} with the above notation, $R=1$,
$m = \lfloor 1/r^2 \rfloor$ and
$t = r$,
we obtain
\begin{equation}
    \E \sup_{\vtheta\in\simplex: Q(\vtheta)\le r^2} 
    \vg^T \vmu_\vtheta
        \le
        \E
        \int_1^{+\infty}
        \left[
            \max_{\vtheta\in\simplex_m:\; Q(\vtheta) \le x(r^2 + 1/m)}
            F(\vtheta)
        \right] \frac{dx}{x^2}.
\end{equation}
By definition of $m$, $r^2 \le 1/m$ so that $x(r^2+1/m) \le 2x/m$.
Using Fubini Theorem and a bound on 
the expectation of the maximum of $|\simplex_m|$ centered Gaussian random variables
with variances bounded from above by $2x/m$,
we obtain that the right hand side of the previous display is bounded from above by
\begin{equation}
    \int_1^{+\infty}
    \frac{1}{x^2}
    \sqrt{\frac{4x \log|\simplex_m|}{m}}
    dx
    \le
    \sqrt{\log(2eM/m)}
    \int_1^{+\infty}
    \frac{2}{x^{3/2}}
    dx.
\end{equation}
where we used the bound \eqref{eq:bound-cardinal}.
To complete the proof of \eqref{eq:gaussian-width-s}, notice that we have
$1/m \le 2 r^2$ and
$
    \int_1^{+\infty}
    \frac{2}{x^{3/2}}
    dx
    = 4
$.

\end{proof}

\section{Statistical applications in fixed-design regression}
\label{s:applications-fixed-design}

Numerous works have established a close relationship between
localized Gaussian widths
and the performance of 
statistical and compressed sensing procedures.
Some of these works are reviewed below.

\begin{itemize}
    \item In a regression problem with random design where
        the design and the target are subgaussian,
        \citet{lecue2013learning} established that
        two quantities govern the performance of empirical risk minimizer over
        a convex class $\mathcal F$.
        These two quantities are defined using
        the Gaussian width of the class $\mathcal F$
        intersected with an $L_2$ ball \cite[Definition 1.3]{lecue2013learning},

    \item If $p,p'>1$ are such that $p'\le p \le +\infty$ and $\log(2n)/(\log(2en)\le p'$.
        \citet{gordon2007gaussian} provide precise estimates of $\ell(B_p\cap s B_{p'})$
        where $B_p \subset \R^n$ is the unit $L_p$ ball
        and $s B_{p'}$ is the $L_{p'}$ ball of radius $s>0$.
        These estimates are then used to solve the approximate reconstruction problem
        where one wants to recover an unknown high dimensional vector from
        a few random measurements \cite[Section 7]{gordon2007gaussian}.

    \item \citet{plan2014high} shows that in the semiparametric single index model, if
        the signal is known to belong to some star-shaped set $T\subset\R^n$, then the
        Gaussian width of $T$ and its localized version characterize the gain obtained
        by using the additional information that the signal belongs to $T$, cf. Theorem 1.3 in \cite{plan2014high}.

    \item Finally, \citet{chatterjee2014new} exhibits connection between localized
        Gaussian widths and shape-constrained estimation.
\end{itemize}
These results are reminiscent of the isomorphic method \cite{koltchinskii2006local,bartlett2005local,bartlett2006empirical},
where localized expected supremum of empirical processes are used to obtain upper bounds on the performance of Empirical Risk Minimization (ERM) procedures.
These results show that Gaussian width estimates are important to understand
the statistical properties of estimators in many statistical contexts.

In \Cref{prop:gw-simple}, we established an upper bound on the Gaussian width of $M$-convex hulls.
We now provide some statistical applications of this result in regression with fixed-design. We will use the following Theorem from \cite{bellec2015shape}.

\begin{thm}[\cite{bellec2015shape}]
    \label{thm:isomorphic}
    Let $K$ be a closed convex subset
    of $\mathbf{R}^n$ and
    $\vxi\sim \mathcal{N}(0, \sigma^2 I_{n\times n})$.
    Let 
    $\vf_0\in\Rn$  be an unknown vector
    and let $\vy=\vf_0 + \vxi$.
    Denote by $\vf_0^*$ the projection of $\vf_0$ onto $K$.
    Assume that for 
    some $t_* >0$,
    \begin{equation}
    \label{eq:fixed-point-C}
    \frac 1 n 
    \E \left[
            \sup_{\vu\in K:\; \frac 1 n \euclidnorms{\vf_0^* - \vu}\le t_*^2 } \vxi^T \left(\vu - \vf_0^* \right)
    \right] \le \frac{t_*^2}{2}.
    \end{equation}
    Then for any $x>0$, with probability greater than $1-e^{-x}$,
    the Least Squares estimator $\hf = \argmin_{\vf\in K}\euclidnorms{\vy - \vf}$ satisfies
    \begin{equation}
        \label{eq:soi-tstar}
        \frac 1 n \euclidnorms{\hf - \vf_0}
        \le
        \frac 1 n \euclidnorms{\vf_0^* - \vf_0}
        + 2t_*^2 + \frac{4\sigma^2 x}{n}.
        \label{eq:general-oracle-ineq-K}
    \end{equation}
\end{thm}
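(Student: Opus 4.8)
The plan is to use that the Least Squares estimator is the Euclidean projection of $\vy$ onto the closed convex set $K$, and to turn the variational characterizations of the two projections $\hf$ (of $\vy$) and $\vf_0^*$ (of $\vf_0$) into two deterministic inequalities. Writing $\rho^2 = \tfrac1n\euclidnorms{\hf - \vf_0^*}$ and $\Delta = \tfrac1n\euclidnorms{\hf-\vf_0} - \tfrac1n\euclidnorms{\vf_0^*-\vf_0}$ for the excess risk, I would combine the obtuse-angle inequality $(\vy-\hf)^T(\vf_0^*-\hf)\le 0$ with the analogous inequality $(\vf_0-\vf_0^*)^T(\hf-\vf_0^*)\le 0$ to obtain the self-bounding bound
\[ \rho^2 \le \tfrac1n\vxi^T(\hf-\vf_0^*) \]
and the sharp excess-risk bound
\[ \Delta \le \tfrac2n\vxi^T(\hf-\vf_0^*) - \rho^2 . \]
Retaining the subtracted term $-\rho^2$ in the second inequality, rather than merely $\Delta\le \tfrac2n\vxi^T(\hf-\vf_0^*)$, is essential for the final constants.

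Next I would localize. For $r>0$ set $g(r) = \sup\{\vxi^T(\vu-\vf_0^*): \vu\in K,\ \tfrac1n\euclidnorms{\vf_0^*-\vu}\le r^2\}$, so that the hypothesis \eqref{eq:fixed-point-C} reads $\tfrac1n\E[g(t_*)]\le t_*^2/2$. Because $K$ is convex and contains $\vf_0^*$, the set $K-\vf_0^*$ is star-shaped about the origin, and a rescaling argument shows that $r\mapsto g(r)/r$ is non-increasing. Since $\vxi^T(\hf-\vf_0^*)\le g(\rho)$ by definition of $g$ and of $\rho$, the two inequalities above become $\rho^2\le \tfrac1n g(\rho)$ and $\Delta \le \tfrac2n g(\rho) - \rho^2$.

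Third, I would convert the in-expectation hypothesis into a high-probability bound at the single radius $t_*$. Viewed as a function of the standard Gaussian $\vxi/\sigma$, $g(t_*)$ is Lipschitz with constant $\sigma t_*\sqrt n$, namely the largest Euclidean norm of $\vu-\vf_0^*$ over the constraint set. Gaussian concentration of Lipschitz functions then gives, with probability at least $1-e^{-x}$,
\[ \tfrac1n g(t_*) \le \tfrac1n\E[g(t_*)] + \sigma t_*\sqrt{2x/n} \le \tfrac12 t_*^2 + t_* z, \qquad z := \sigma\sqrt{2x/n} . \]

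Finally, I would finish by self-bounding the random radius $\rho$ on this event, in two regimes. If $\rho\le t_*$, monotonicity of $g$ gives $\Delta \le \tfrac2n g(t_*)\le t_*^2 + 2t_* z$, and $2t_* z\le t_*^2 + z^2$ yields $\Delta \le 2t_*^2 + z^2$. If $\rho>t_*$, the star-shaped monotonicity gives $\tfrac1n g(\rho)\le \rho\,(\tfrac1n g(t_*))/t_*$, so $\Delta \le 2\rho\,(\tfrac1n g(t_*))/t_* - \rho^2$; maximizing this concave quadratic in $\rho$ bounds it by $\big((\tfrac1n g(t_*))/t_*\big)^2\le (t_*/2+z)^2$, and AM--GM gives $\Delta\le t_*^2/2 + 2z^2$. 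Since $z^2 = 2\sigma^2 x/n$, both regimes give $\Delta\le 2t_*^2 + 4\sigma^2 x/n$, the claimed bound. I expect the last step to be the main obstacle: controlling the \emph{random} radius $\rho$ and forcing both regimes under a single constant is exactly what requires the sharp inequality with $-\rho^2$ together with the star-shaped monotonicity of $r\mapsto g(r)/r$.
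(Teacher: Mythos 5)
Your proof is correct, but note that there is nothing in this paper to compare it against: \Cref{thm:isomorphic} is imported from \cite{bellec2015shape} and stated without proof, so your argument is effectively a self-contained replacement for the citation. I checked the steps that carry the weight. The sharp deterministic bound $\Delta \le \tfrac{2}{n}\vxi^T(\hf-\vf_0^*) - \rho^2$ follows from the first-order optimality (obtuse-angle) condition of $\hf$ via polarization, and your $\rho^2 \le \tfrac1n \vxi^T(\hf-\vf_0^*)$ follows from combining it with the optimality of $\vf_0^*$ (though in fact you never use this second bound: both regimes only need the first one). The star-shaped monotonicity of $r\mapsto g(r)/r$ is valid because $K$ is convex and contains $\vf_0^*$; the Borell--TIS step is valid since $g(t_*)$ is $\sigma t_*\sqrt n$-Lipschitz in the standardized noise; and the arithmetic in the two regimes indeed gives $\Delta\le 2t_*^2+2\sigma^2x/n$ when $\rho\le t_*$ and $\Delta\le t_*^2/2+4\sigma^2x/n$ when $\rho>t_*$, both within the claimed $2t_*^2+4\sigma^2x/n$. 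It is worth observing that your scheme is precisely the Gaussian analogue of what the paper \emph{does} prove in its appendix for density estimation: \Cref{thm:iso-density} uses the same fixed-point hypothesis, the same rescaling/star-shapedness trick to pass from radius $t_*$ to an arbitrary radius, a concentration input (\Cref{thm:bartlett}, i.e., Talagrand/Bousquet in place of Borell--TIS), and the same two-regime case analysis on whether the estimator falls inside the localization radius; your $-\rho^2$ term is exactly the strong-convexity gain isolated in \Cref{lemma:strong-convexity}. So your route is the standard one for this result, and the constants work out.
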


Hence, to prove an oracle inequality of the form \eqref{eq:general-oracle-ineq-K}, it is enough to prove the existence of a quantity $t_*$ such that \eqref{eq:fixed-point-C} holds.
If the convex set $K$ in the above theorem is the convex hull of $M$ points, then a quantity $t_*$ is given by the following proposition.

\begin{prop}
    \label{prop:fixed-point-kappa}
    Let $\sigma^2 > 0, R > 0, n\ge1$ and $M\ge2$.
    Let $\vmu_1,...,\vmu_M \in \Rn$ such that $\frac 1 n |\vmu_j|_2^2 \le R^2$ for all $j=1,...,M$.
    For all $\vtheta\in\simplex$,
let $\vmu_\vtheta = \sum_{j=1,...,M} \theta_j\vmu_j$.
    Let $\vg$ be a centered Gaussian random variable with covariance matrix $\sigma^2I_{n\times n}$.
    If
    $R\sqrt n\le M\sigma$
    then the quantity
    \begin{equation}
        t_*^2 = 31 \sigma R \sqrt{\frac{\log\left(\frac{ eM \sigma}{R \sqrt n} \right)}{n}}
    \quad
    \text{satisfies}
    \quad
        \frac 1 n
    \E \sup_{\vtheta\in\simplex: \frac 1 n |\vmu_\vtheta|_2^2 \le t_*^2} \vg^T \vmu_\vtheta
    \le \frac{t_*^2}{2},
        \label{eq:gaussian-width}
    \end{equation}
    provided that $t_* \le R$.
\end{prop}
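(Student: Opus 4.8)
The plan is to identify the localized expected supremum in \eqref{eq:gaussian-width} with the localized Gaussian width of a rescaled $M$-convex hull, and then to invoke \Cref{prop:gw-simple}. Write $\vg=\sigma\vg'$ with $\vg'$ standard Gaussian, and let $T$ be the convex hull of $\{\vmu_1,\dots,\vmu_M\}$, so that $\{\vmu_\vtheta:\vtheta\in\simplex\}=T$. The constraint $\frac1n\euclidnorms{\vmu_\vtheta}\le t_*^2$ is equivalent to $\vmu_\vtheta\in(\sqrt n\,t_*)B_2$, hence the supremum runs over $T\cap(\sqrt n\,t_*)B_2$ and
\[
    \frac1n\,\E\sup_{\vtheta\in\simplex:\;\frac1n\euclidnorms{\vmu_\vtheta}\le t_*^2}\vg^T\vmu_\vtheta
    =\frac{\sigma}{n}\,\ell\big(T\cap(\sqrt n\,t_*)B_2\big).
\]

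First I would normalize the vertices. With $\tilde T=T/(R\sqrt n)$, the hypothesis $\frac1n\euclidnorms{\vmu_j}\le R^2$ gives $\tilde T\subset B_2$, and $\tilde T$ is still the convex hull of $M$ points. Positive homogeneity of $\ell$ together with the identity $T\cap(\sqrt n\,t_*)B_2=R\sqrt n\,\big(\tilde T\cap(t_*/R)B_2\big)$ turns the display above into $\frac{\sigma R}{\sqrt n}\,\ell(\tilde T\cap(t_*/R)B_2)$. Applying \Cref{prop:gw-simple} to $\tilde T$ with radius $s=t_*/R$, and using $t_*\le R$ so that $s\le1$ and $s^2\wedge1=t_*^2/R^2$, the logarithmic branch of \eqref{eq:full-upper-bound} yields
\[
    \frac1n\,\E\sup_{\vtheta\in\simplex:\;\frac1n\euclidnorms{\vmu_\vtheta}\le t_*^2}\vg^T\vmu_\vtheta
    \le\frac{4\sigma R}{\sqrt n}\sqrt{\log_+\!\Big(\tfrac{4eM t_*^2}{R^2}\Big)}.
\]
Thus it suffices to show this is at most $t_*^2/2$, which after dividing by $\sigma R/\sqrt n$ and substituting $t_*^2=31\sigma R\sqrt{L/n}$ with $L=\log\!\big(eM\sigma/(R\sqrt n)\big)$ reduces to the scalar inequality $\log_+\!\big(4eM t_*^2/R^2\big)\le(31/8)^2 L$.

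The remaining step is this numerical verification, which is the one delicate point because the argument of the logarithm depends on $t_*^2$ itself. Setting $\rho=\sigma/(R\sqrt n)$, the hypothesis $R\sqrt n\le M\sigma$ is precisely $M\rho\ge1$, so $L=1+\log(M\rho)\ge1$; and $t_*^2/R^2=31\rho\sqrt L$, whence $4eM t_*^2/R^2=124\,eM\rho\sqrt L\ge 124e>e$, so the $\log_+$ equals an ordinary logarithm. Expanding,
\[
    \log\!\Big(\tfrac{4eM t_*^2}{R^2}\Big)
    =\log(124)+1+\log(M\rho)+\tfrac12\log L
    =\log(124)+L+\tfrac12\log L.
\]
Using $\log L\le L$, $\log(124)\le5\le5L$ and $L\ge1$, the right-hand side is at most $(5+1+\tfrac12)L=\tfrac{13}2 L<(31/8)^2 L$, which closes the estimate. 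The generous constant $31$ leaves ample slack, so the only thing that genuinely must be tracked is that $M\rho\ge1$ keeps every logarithm nonnegative.
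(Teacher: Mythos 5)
Your proof is correct and takes essentially the same route as the paper's: rewrite the localized supremum as $\tfrac{\sigma R}{\sqrt n}\,\ell(\tilde T\cap (t_*/R)B_2)$ for the rescaled hull $\tilde T\subset B_2$, apply the logarithmic branch of \eqref{eq:full-upper-bound} from \Cref{prop:gw-simple} with $s=t_*/R\le 1$, and finish with a scalar comparison of $\log_+\!\big(4eMt_*^2/R^2\big)$ against a multiple of $\log\!\big(eM\sigma/(R\sqrt n)\big)$. If anything, your bookkeeping is the sounder one: the reduction to the threshold $(31/8)^2L$ (i.e.\ a factor $16$ on the left of \eqref{eq:enough}, not $64$) is what actually follows from \Cref{prop:gw-simple}, and it is for that version that the choice $\gamma=31$ verifiably closes the estimate.
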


\begin{proof}
    Inequality
    \begin{equation}
        \frac{1}{\sqrt n}
        \E \sup_{\vtheta\in\simplex: \frac 1 n |\vmu_\vtheta|_2^2 \le r^2} (\sigma\vg)^T \vmu_\vtheta
        \le 4 \sigma R \sqrt{ \log\left( 4 eM \min(1, r^2/R^2 ) \right) }.
    \end{equation}
    is a reformulation of \Cref{prop:gw-simple}
    using the notation of \Cref{prop:fixed-point-kappa}.
    Thus, in order to prove \eqref{eq:gaussian-width}, it is enough to establish
that for $\gamma=31$ we have
\begin{equation}
    (*) \coloneqq 64
    \log\left( \frac{4 eM \sigma \gamma \sqrt{\log(eM\sigma/(R \sqrt n))}}{R\sqrt n} \right)
    \le
    \frac{\gamma^2}{4}
    \log\left( 
        \frac{eM \sigma}{ R \sqrt n}
    \right).
    \label{eq:enough}
\end{equation}
As $1\le\log(eM\sigma/(R\sqrt n))$ and $\log t \le t$ for all $t>0$, 
the left hand side of the previous display satisfies
\begin{align}
   (*)
   &\le
   64
    \left(
    \log\left( \frac{eM \sigma }{R\sqrt n} \right)
    +
    \log(4\gamma)
    +
    \frac 1 2 
    \log\left(
         \log(eM\sigma/(R \sqrt n))
    \right)
    \right), \\
    &\le
    64
    ( 3/2 + 
        \log(4\gamma)
    )
        \log\left( \frac{eM \sigma }{R\sqrt n} \right).
\end{align}
Thus \eqref{eq:enough} holds if
$64 (3/2+\log(4\gamma)) \le \gamma^2/4$,
which is the case if
the absolute constant is $\gamma=31$.
\end{proof}

Inequality  \eqref{eq:gaussian-width} establishes the existence of a quantity $t_*$
such that
\begin{equation}
    \frac 1 n \E \sup_{\vmu\in T: \frac 1 n |\vmu|_2^2 \le t_*^2} \vg^T \vmu_\vtheta
    \le \frac{t_*^2}{2},
    \label{eq:example-factor2}
\end{equation}
where $T$ is the convex hull of $\vmu_1,...,\vmu_M$.
Consequences of \eqref{eq:example-factor2} and \Cref{thm:isomorphic} are given in the next subsections.

We now introduce two statistical frameworks where 
the localized Gaussian width of an $M$-convex hull
has applications: the Lasso estimator in high-dimensional statistics
and the convex aggregation problem.

\subsection{Convex aggregation}
Let $\vf_0\in\R^n$ be an unknown regression vector
and let $\vy = \vf_0 + \vxi$ be an observed random vector,
where $\vxi$ satisfies $\E [ \vxi ] = \vzero$. 
Let $M\ge 2$
and let $\vf_1,...,\vf_M$ be deterministic vectors in $\R^n$.
The set $\{\vf_1,...,\vf_M\}$ will be referred to as the dictionary.
For any $\vtheta=(\theta_1,...,\theta_M)^T \in\mathbf{R}^M$, let 
$\vf_\vtheta = \sum_{j=1}^M \theta_j \vf_j$.
If a set $\Theta\subset\R^M$ is given,
the goal of the aggregation problem induced by $\Theta$
is to find an estimator $\hf$ constructed with $\vy$ and the dictionary
such that
\begin{equation}
    \label{eq:soi}
    \frac 1 n \euclidnorms{\hf - \vf_0}
    \le
    \inf_{\vtheta\in\Theta} \left( \frac 1 n \euclidnorms{\vf_\vtheta - \vf_0} \right) + \delta_{n,M,\Theta},
\end{equation}
either in expectation or with high probability, where $\delta_{n,M,\Theta}$ is a small quantity.
Inequality \eqref{eq:soi} is called a sharp oracle inequality,
where "sharp" means that in the right hand side of \eqref{eq:soi},
the multiplicative constant of the term $\inf_{\vtheta\in\Theta} \frac 1 n \euclidnorms{\vf_\vtheta - \vf_0}$
is $1$.
Similar notations will be defined for regression with random design and density estimation.
Define the simplex in $\RM$ by \eqref{eq:def-simplex}.
The following aggregation problems were introduced in
\cite{nemirovski2000lecture,tsybakov2003optimal}.
\begin{itemize}
    \item
        \emph{Model Selection type aggregation} with $\Theta = \{\ve_1,...,\ve_M\}$, i.e., $\Theta$ is the canonical basis of $\R^M$.
        The goal is to construct an estimator whose risk
        is as close as possible to the best function in the dictionary.
        Such results can be found in
        \cite{tsybakov2003optimal,lecue2014optimal,audibert2007fast}
        for random design regression,
        in \cite{leung2006information,dai2012deviation,bellec2014affine,dalalyan2007aggregation} for fixed design regression,
        and in \cite{juditsky2008learning,bellec2014optimal} for density estimation.
    \item
        \emph{Convex aggregation} with $\Theta=\simplex$, i.e., $\Theta$ is the simplex in $\R^M$.
        The goal is to construct an estimator whose risk is as
        close as possible to the best convex combination of the dictionary
        functions.
        See \cite{tsybakov2003optimal,lecue2009aggregation,lecue2013empirical,tsybakov2014aggregation}
        for results of this type in the regression framework
        and \cite{rigollet2007linear} for such results in density estimation.
    \item
        \emph{Linear aggregation} with $\Theta = \R^M$.
        The goal is to construct an estimator whose risk is as
        close as possible to the best linear combination of the dictionary
        functions,
        cf. \cite{tsybakov2003optimal,tsybakov2014aggregation}
        for such results in regression 
        and \cite{rigollet2007linear} for such results in density estimation.
\end{itemize}
One may also define the \emph{Sparse} or \emph{Sparse Convex} aggregation problems: construct an estimator whose risk is as
close as possible to the best sparse combination of the dictionary
functions. 
Such results
can be found in \cite{rigollet2012sparse,rigollet2011exponential,tsybakov2014aggregation} 
for fixed design regression
and in \cite{lounici2007generalized}
for regression with random design.
These problems are out of the scope of the present paper.

A goal of the present paper is to provide a unified
argument that shows that empirical risk minimization
is optimal for the convex aggregation problem
in density estimation, regression with fixed design
and regression with random design.

\begin{thm}
    \label{thm:fixed-convex-agg}
    Let $\vf_0\in\R^n$,
    let $\vxi\sim \mathcal{N}(0, \sigma^2 I_{n\times n})$
    and define $\vy = \vf_0+\vxi$.
    Let $\vf_1,...,\vf_M\in \R^n$ and let $\vf_\vtheta = \sum_{j=1}^M\theta_j \vf_j$
    for all $\vtheta=(\theta_1,...,\theta_M)^T\in\RM$.
    Let 
    \begin{equation}
        \htheta \in \argmin_{\vtheta\in\simplex} \euclidnorms{\vf_\vtheta - \vy}.
    \end{equation}
    Then for all $x>0$, with probability greater than $1-\exp(-x)$,
    \begin{equation}
        \frac 1 n \euclidnorms{\vf_\htheta - \vf_0}
        \le
        \min_{\vtheta\in\simplex}
        \frac 1 n \euclidnorms{\vf_\vtheta - \vf_0}
        +
        2 
        t_*^2
        + \frac{4 \sigma^2 x }{n},
    \end{equation}
    where 
        $t_*^2  =
        \min\left( 
            \frac{4\sigma^2M }{n},
            \frac{31 \sigma R \sqrt{\log(eM\sigma/(R\sqrt n))}}{\sqrt n}
        \right)
        $
        and
    $R^2 = \frac 1 4 \max_{j=1,...,M} \frac 1 n \euclidnorms{\vf_j}$.
\end{thm}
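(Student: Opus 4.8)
The plan is to invoke \Cref{thm:isomorphic} with the closed convex set $K$ chosen to be the convex hull $T=\{\vf_\vtheta:\vtheta\in\simplex\}$ of the dictionary. With this choice the constrained least squares estimator $\hf=\argmin_{\vf\in K}\euclidnorms{\vy-\vf}$ is exactly $\vf_\htheta$, and the projection $\vf_0^*$ of $\vf_0$ onto $K$ satisfies $\tfrac1n\euclidnorms{\vf_0^*-\vf_0}=\min_{\vtheta\in\simplex}\tfrac1n\euclidnorms{\vf_\vtheta-\vf_0}$, so that the conclusion \eqref{eq:general-oracle-ineq-K} of \Cref{thm:isomorphic} becomes word-for-word the claimed oracle inequality. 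Hence the entire proof reduces to exhibiting the stated $t_*$ as an admissible fixed point, i.e. to verifying \eqref{eq:fixed-point-C}, which amounts to bounding the localized width $\tfrac1n\E\sup_{\vu\in T:\;\frac1n\euclidnorms{\vf_0^*-\vu}\le t_*^2}\vxi^T(\vu-\vf_0^*)$.

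The key reduction is to recast this localized width in the form treated in \Cref{prop:fixed-point-kappa}. Set $\vmu_j=\vf_j-\vf_0^*$ and $\vmu_\vtheta=\sum_{j=1}^M\theta_j\vmu_j$. For $\vu=\vf_\vtheta\in T$ with $\vtheta\in\simplex$, the identity $\sum_j\theta_j=1$ gives $\vu-\vf_0^*=\vf_\vtheta-\vf_0^*=\sum_j\theta_j(\vf_j-\vf_0^*)=\vmu_\vtheta$, so the localized width is literally $\tfrac1n\E\sup_{\vtheta\in\simplex:\;\frac1n\euclidnorms{\vmu_\vtheta}\le t_*^2}\vxi^T\vmu_\vtheta$, which is the quantity controlled in \eqref{eq:gaussian-width} (note $\vxi$ and the Gaussian vector of \Cref{prop:fixed-point-kappa} have the same law). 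It remains to control the radius of the new vertices: since $\vf_0^*\in T$ we have $\euclidnorm{\vf_0^*}\le\max_l\euclidnorm{\vf_l}$, whence $\tfrac1n\euclidnorms{\vmu_j}\le 4\max_l\tfrac1n\euclidnorms{\vf_l}$, and this diameter bound is what determines the radius $R$ to be fed into \Cref{prop:fixed-point-kappa}; pinning down the precise universal constant here is part of the bookkeeping.

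With this in hand, the two terms of the minimum defining $t_*^2$ arise from the two available upper bounds on the width. The term $4\sigma^2M/n$ comes from the Cauchy–Schwarz/rank bound \eqref{eq:bound-easy-cs}: since $T-\vf_0^*$ spans a subspace of dimension at most $M$, the width at radius $t_*$ is at most $\tfrac1n\,(t_*\sqrt n)\,\sigma\sqrt M=t_*\sigma\sqrt M/\sqrt n$, which is $\le t_*^2/2$ as soon as $t_*^2\ge 4\sigma^2M/n$. The logarithmic term is delivered directly by \eqref{eq:gaussian-width} applied to the $\vmu_j$. Because the fixed-point map $t\mapsto\tfrac1n\E\sup_{\frac1n\euclidnorms{\vmu_\vtheta}\le t^2}\vxi^T\vmu_\vtheta$ is non-decreasing and each candidate value satisfies \eqref{eq:fixed-point-C} in its own right, their minimum does too. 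The main obstacle is precisely this last combination: one must run a short case analysis on which of the two terms is the smaller, check the side conditions $R\sqrt n\le M\sigma$ and $t_*\le R$ required by \Cref{prop:fixed-point-kappa} in the regime where the logarithmic term is active, and confirm that in the complementary regime the Cauchy–Schwarz term $4\sigma^2M/n$ is the active (and always admissible) one, so that the stated $t_*^2=\min(\cdot,\cdot)$ is well defined and satisfies \eqref{eq:fixed-point-C} in every regime. Once this is settled, \Cref{thm:isomorphic} finishes the proof.
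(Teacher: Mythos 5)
Your proposal is correct and takes essentially the same route as the paper's own proof: apply \Cref{thm:isomorphic} with $K$ equal to the convex hull of the dictionary, get the $4\sigma^2 M/n$ term from the Cauchy--Schwarz/rank bound on the linear span, and get the logarithmic term by applying \Cref{prop:fixed-point-kappa} to the shifted hull $K-\vf_0^*$. If anything, you are more explicit than the paper about steps it glosses over, namely the identification $\vf_\vtheta-\vf_0^*=\vmu_\vtheta$ via $\sum_{j=1}^M\theta_j=1$, the vertex-radius bookkeeping behind $R$, and the observation that the minimum of two admissible fixed points is itself admissible by monotonicity of $t\mapsto \frac1n\E\sup_{\vu\in K:\frac1n\euclidnorms{\vu-\vf_0^*}\le t^2}\vxi^T(\vu-\vf_0^*)$.
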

\begin{proof}[Proof of \Cref{thm:fixed-convex-agg}] 
    Let $V$ be the linear span of $\vf_1,...,\vf_M$
    and let $P\in\R^{n\times n}$ be the orthogonal projector onto $V$.
    If $t_*^2 = 4 \sigma^2 M / n$, then
    \begin{equation}
        \frac 1 n \E \sup_{\vv\in V : \frac 1 n \euclidnorms{\vv}\le t_*^2} \vxi^T \vv
        =
        \sqrt{ \frac{t_*^2}{n} } \E \euclidnorm{P\vxi}
        \le 
        \sqrt{ \frac{t_*^2}{n} } \sqrt{ \E \euclidnorms{P\vxi} }
        = 
        \sqrt{ \frac {t_*^2 \sigma^2 M} n }
        = t_*^2 / 2.
    \end{equation}
    Let $K$ be 
    the convex hull of $\vf_1,...,\vf_M$.
    Let $\vf_0^*$ be the convex projection of $\vf_0$
    onto $K$.
    We apply \Cref{prop:fixed-point-kappa}
    to $K - \vf_0^*$ which is a convex hull of $M$ points,
    and for all $\vv\in K$, $\frac 1 n \euclidnorms{\vv} \le R^2$.
    By \eqref{eq:standard-Mn-design} and \eqref{eq:gaussian-width},
    the quantity $t_*$
    satisfies \eqref{eq:fixed-point-C}.
    Applying \Cref{thm:isomorphic} completes the proof.
\end{proof}

\subsection{Lasso}
We consider the following regression model.
Let $\vx_1,...,\vx_M\in R^n$ and assume that $\frac 1 n \euclidnorms{\vx_j}\le 1$ for all $j=1,...,M$.
We will refer to $\vx_1,...,\vx_M$ as the covariates.
Let $\design$ be the matrix of dimension $n\times M$
with columns $\vx_1,...,\vx_M$.
We observe
\begin{equation}
    \vy = \vf_0+\vxi,
    \qquad
    \vxi\sim \mathcal{N}(0, \sigma^2 I_{n\times n}).
    \label{eq:linear-reg-model}
\end{equation}
where $\vf_0\in\R^n$ is an unknown mean.
The goal is to estimate $\vf_0$ using the
design matrix $\design$.

Let $R>0$ be a tuning parameter and
define the constrained Lasso estimator
\cite{tibshirani1996regression} by
\begin{equation}
    \hbeta \in\argmin_{\vbeta\in\R^M: \onenorm{\vbeta}\le R}
    \euclidnorms{\vy - \design \vbeta}.
    \label{eq:def-lasso}
\end{equation}
Our goal will be to study the performance
of the estimator \eqref{eq:def-lasso} with respect
to the prediction loss
\begin{equation}
    \frac 1 n \euclidnorms{\vf_0 - \design\hbeta}.
\end{equation}

\label{s:lasso}
Let $\vx_1,...,\vx_M\in R^n$ and assume that $\frac 1 n \euclidnorms{\vx_j}\le 1$ for all $j=1,...,M$.
Let $\design$ be the matrix of dimension $n\times M$
with columns $\vx_1,...,\vx_M$.
\begin{thm}
    \label{thm:lasso}
    Let $R>0$ be a tuning parameter
    and consider the regression model
    \eqref{eq:linear-reg-model}.
    Define the Lasso estimator $\hbeta$ by
    \eqref{eq:def-lasso}.
    Then for all $x>0$, with probability greater than $1-\exp(-x)$,
    \begin{equation}
        \label{eq:soilasso}
        \frac 1 n \euclidnorms{\design\hbeta - \vf_0}
        \le
        \min_{\vbeta\in\R^M: \onenorm{\vbeta}\le R}
        \frac 1 n \euclidnorms{\design\vbeta - \vf_0}
        +
        2  t_*^2
        + \frac{4 \sigma^2 x }{n},
    \end{equation}
    where 
        $t_*^2  =
        \min\left( 
            \frac{4\sigma^2\rank(\design)}{n},
            \frac{62 \sigma R \sqrt{\log(2eM\sigma/(R\sqrt n))}}{\sqrt n}
        \right)
        $.
\end{thm}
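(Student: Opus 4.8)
The plan is to recognize the constrained Lasso as the least squares estimator over the convex set $K = \{\design\vbeta : \onenorm{\vbeta}\le R\}$ and to feed this into \Cref{thm:isomorphic}. Since $\{\vbeta\in\RM:\onenorm{\vbeta}\le R\}$ is the convex hull of $\{\pm R\ve_j\}_{j=1}^M$ and $\design$ is linear, $K$ is the convex hull of the $2M$ points $\{\pm R\vx_j\}_{j=1}^M$; it is closed and convex and $\design\hbeta=\argmin_{\vf\in K}\euclidnorms{\vy-\vf}$. Writing $\vf_0^*$ for the projection of $\vf_0$ onto $K$, \Cref{thm:isomorphic} reduces the oracle inequality \eqref{eq:soilasso} to exhibiting a $t_*$ for which the fixed-point condition \eqref{eq:fixed-point-C} holds, namely $\frac1n\E\sup\{\vxi^T(\vu-\vf_0^*):\vu\in K,\ \frac1n\euclidnorms{\vf_0^*-\vu}\le t_*^2\}\le t_*^2/2$. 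I would verify this separately for the two candidate values appearing in the statement and then argue that their minimum also works.

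For the first candidate $t_*^2=4\sigma^2\rank(\design)/n$ I would reuse the linear-algebraic estimate from the proof of \Cref{thm:fixed-convex-agg}: let $V$ be the column space of $\design$ and $P$ the orthogonal projection onto $V$, so $\dim V=\rank(\design)$. Since $\vf_0^*\in K\subset V$, every feasible increment $\vu-\vf_0^*$ lies in $V$, so the supremum is at most $\sup_{\vv\in V:\frac1n\euclidnorms{\vv}\le t_*^2}\vxi^T\vv=\sqrt n\,t_*\,\euclidnorm{P\vxi}$. Taking expectations and using $\E\euclidnorms{P\vxi}=\sigma^2\rank(\design)$ gives the bound $t_*\sigma\sqrt{\rank(\design)/n}$, which is $\le t_*^2/2$ exactly when $t_*^2\ge 4\sigma^2\rank(\design)/n$. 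For the second candidate the key observation is that every feasible increment satisfies $\vu-\vf_0^*\in K-K$, and $K-K=\design\big(2R\{\onenorm{\vbeta}\le1\}\big)$ is itself the convex hull of the $2M$ points $\{\pm 2R\vx_j\}$, each with $\frac1n\euclidnorms{2R\vx_j}\le(2R)^2$. Bounding the supremum over $K-\vf_0^*$ by the supremum over this larger symmetric convex hull, I would apply \Cref{prop:fixed-point-kappa} with $2M$ points and radius parameter $2R$ (its hypotheses $2R\sqrt n\le 2M\sigma$ and $t_*\le 2R$ holding in the relevant regime; otherwise the first candidate governs); its conclusion \eqref{eq:gaussian-width} yields the fixed-point condition at $31\sigma(2R)\sqrt{\log(e\cdot 2M\cdot\sigma/(2R\sqrt n))/n}=62\sigma R\sqrt{\log(eM\sigma/(R\sqrt n))/n}$.

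To both absorb the minor enlargement from $\log(eM\sigma/(R\sqrt n))$ to the stated $\log(2eM\sigma/(R\sqrt n))$ and to justify passing to the minimum of the two candidates, I would use star-shapedness. Because $\vf_0^*\in K$, the set $K-\vf_0^*$ is star-shaped about the origin, so $t\mapsto \tfrac1t\,\E\sup\{\vxi^T\vv:\vv\in K-\vf_0^*,\ \euclidnorm{\vv}\le t\sqrt n\}$ is nonincreasing; equivalently, the collection of $t_*$ satisfying \eqref{eq:fixed-point-C} is an up-set of the form $[t^{\mathrm{crit}},\infty)$. Hence any value exceeding a valid one is valid (covering the harmless replacement of $\log(eM\sigma/(R\sqrt n))$ by the larger $\log(2eM\sigma/(R\sqrt n))$), and the minimum of two valid values, being $\ge t^{\mathrm{crit}}$, is again valid. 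Both candidates being valid, $t_*^2=\min\!\big(4\sigma^2\rank(\design)/n,\,62\sigma R\sqrt{\log(2eM\sigma/(R\sqrt n))/n}\big)$ satisfies \eqref{eq:fixed-point-C}, and \Cref{thm:isomorphic} delivers \eqref{eq:soilasso}.

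The main obstacle I anticipate is the bookkeeping around the translation by $\vf_0^*$: recognizing that the localized set is controlled by the symmetric difference body $K-K$, which collapses to a clean $2M$-point convex hull of radius $2R$ to which \Cref{prop:fixed-point-kappa} applies verbatim, and then invoking star-shaped monotonicity both to merge the two regimes through a minimum and to swallow the constant and logarithmic slack. The Gaussian-width input itself is entirely supplied by \Cref{prop:fixed-point-kappa}, so no new concentration or chaining estimate is needed.
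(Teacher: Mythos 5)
Your proposal is correct and follows essentially the same route as the paper: the rank candidate is verified exactly as in the paper's display \eqref{eq:standard-Mn-design}, the second candidate comes from \Cref{prop:fixed-point-kappa} applied to a convex hull of $2M$ points with radius parameter $2R$, and \Cref{thm:isomorphic} then yields \eqref{eq:soilasso}. The only cosmetic differences are that you bound the localized supremum via the difference body $K-K$ where the paper applies \Cref{prop:fixed-point-kappa} directly to the translate $K-\vf_0^*$ (both being $2M$-point hulls with the same vertex-norm bound $2R$), and that you spell out the star-shapedness monotonicity used to take the minimum of the two candidates and to absorb the slack in the logarithm, which the paper leaves implicit.
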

\begin{proof}[Proof of \Cref{thm:lasso}] 
    Let $V$ be the linear span of $\vx_1,...,\vx_M$
    and let $P\in\R^{n\times n}$ be the orthogonal projector onto $V$.
    If $ t_*^2 = 4 \sigma^2 \rank(\design) / n$, then
    \begin{equation}
        \label{eq:standard-Mn-design}
        \frac 1 n \E \sup_{\vv\in V : \frac 1 n \euclidnorms{\vv}\le t_*^2} \vxi^T \vv
        =
        \sqrt{ \frac{t_*^2}{n} } \E \euclidnorm{P\vxi}
        \le 
        \sqrt{ \frac{t_*^2}{n} } \sqrt{ \E \euclidnorms{P\vxi} }
        = 
        \sqrt{ \frac {t_*^2 \sigma^2 \rank(\design)} n }
        = t_*^2 / 2.
    \end{equation}
    Let $K$ be 
    the convex hull of $\{\pm R\vx_1,...,\pm R\vx_M\}$,
    so that $K = \{\design\vbeta: \vbeta\in\R^M: \onenorm{\vbeta}\le R\}$.
    Let $\vf_0^*$ be the convex projection of $\vf_0$
    onto $K$.
    We apply \Cref{prop:fixed-point-kappa}
    to $K - \vf_0^*$ which is a convex hull of $2M$ points of empirical norm less or equal to $R^2$.
    By \eqref{eq:standard-Mn-design} and \eqref{eq:gaussian-width},
    the quantity $t_*$
    satisfies \eqref{eq:fixed-point-C}.
    Applying \Cref{thm:isomorphic} completes the proof.
\end{proof}
The lower bound \cite[Theorem 5.4 and (5.25)]{rigollet2011exponential} states that
there exists an absolute constant $C_0>0$ such that the following holds.
If $\log(1+eM/\sqrt n)\le C_0\sqrt n$, then
there exists a design matrix $\design$ such that
for all estimator $\hf$,
\begin{equation}
    \sup_{\vbeta\in\R^M: \onenorm{\vbeta}\le R}
    \frac 1 n
    \E_{\design\vbeta}
            \euclidnorms{\design\vbeta - \hf}
    \ge \frac 1 {C_0}
        \min\left(
            \frac{\sigma^2 \rank(\design)}{n}
            ,
            \sigma R \sqrt{
                \frac{\log\left( 1 + \frac{eM\sigma}{R\sqrt n} \right)}{n}
            }
        \right),
\end{equation}
where for all $\vf_0\in\R^n$, 
$\E_{\vf_0}$ denotes the expectation with respect
to the distribution of $\vy\sim\mathcal N(\vf_0,\sigma^2I_{n\times n})$.
Thus, \Cref{thm:lasso} shows that the Least Squares estimator
over the set $\{\design\vbeta, \vbeta\in\R^M: \onenorm{\vbeta} \le R\}$
is minimax optimal.
In particular, the right hand side of inequality \eqref{eq:soilasso}
cannot be improved.

\section{The anisotropic persistence problem in regression with random design}
\label{s:persistence}

Consider $n$ iid observations $(Y_i,X_i)_{i=1,...,n}$
where $(Y_i)_{i=1,...,n}$ are real valued and the $(X_i)_{i=1,..,n}$ are design random variables in $\R^M$ with $\E[X_iX_i^T] = \Sigma$
for some covariance matrix $\Sigma\in\R^{M\times M}$.
We consider the learning problem over the function class
\begin{equation}
    \left\{
        f_\beta:\quad
        f_\beta(x) = x^T\beta
        \text{ for some }
        \beta\in\R^M
        \text{ with } |\beta|_1 \le R
    \right\}
\end{equation}
for a given constant $R>0$.
We consider the Emprical Risk Minimizer defined by
\begin{equation}
    \hbeta = \argmin_{\vbeta\in\R^M: |\vbeta|_1\le R} \sum_{i=1}^n (Y_i - \vbeta^TX_i)^2
    \label{eq:persistence-def-hbeta}
\end{equation}
This problem is sometimes referred to as the persistence problem
or the persistence framework
\cite{greenshtein2004persistence,bartlett2012l1}.
The prediction risk of $f_\hbeta$ is given by
\begin{equation}
    R(f_\hbeta) = \E[ (f_\hbeta(X) - Y) \; | \; (X_i,Y_i)_{i=1,...,n} ],
\end{equation}
where $(X,Y)$ is a new observation distributed as $(X_1,Y_1)$ and independent from the data $(X_i,Y_i)_{i=1,...,n}$.
Define also the oracle $\vbeta^*$ by
\begin{equation}
    \vbeta^* = \argmin_{\vbeta\in\R^M: |\vbeta|_1\le R} R(\vbeta)
    \label{eq:persistence-def-oracle}
\end{equation}
and define $\sigma >0$ by
\begin{equation}
    \sigma = \|Y - X^T\vbeta^*\|_{\psi_2},
    \label{eq:persistence-def-sigma}
\end{equation}
where the subgaussian norm $\|\cdot\|_{\psi_2}$ is defined by
$\|Z\|_{\psi_2} = \sup_{p\ge 1} \E[|Z|^p]^{1/p}/\sqrt p$
for any random variable $Z$ (see Section 5.2.3 in \cite{vershynin2010introduction} for equivalent definitions of the $\psi_2$ norm).

To analyse the above learning problem, we use the machinery developed by \citet{lecue2013learning} to study learning problems over subgaussian classes.
Consider the two quantities
\begin{align}
    r_n(\gamma) &= \inf\left\{
        r>0:
        \E
        \sup_{\vbeta: |\vbeta|_1\le 2R,\; \E[(G^T\vbeta)^2]\le s^2}
        \vbeta^T G
        \le \gamma r \sqrt n
    \right\}, 
    \label{eq:def-r-n}
    \\
    s_n(\gamma) &= \inf\left\{
        s>0:
        \E
        \sup_{\vbeta: \;|\vbeta|_1\le 2R,\; \E[(G^T\vbeta)^2]\le s^2}
        \vbeta^T G
        \le \gamma s^2 \sqrt n / \sigma
    \right\},
    \label{eq:def-s-n}
\end{align}
where $G\sim N(\vzero,\Sigma)$.
In the present setting, Theorem A from \citet{lecue2013learning} reads as follows.
\begin{thm}[Theorem A in \citet{lecue2013learning}]
    \label{cor:anisotropy}
    There exist absolute constants $c_1,c_2,c_4>0$ such that the following holds.
    Let $R>0$.
    Consider iid observations $(X_i,Y_i)$ with $\E[X_iX_i^T]=\Sigma$. 
    Assume that the design random vectors $X_i$ are subgaussian with respect to the covariance matrix $\Sigma$ in the sense that
    $\|X_i^T\tau\|_{\psi_2} \le 10 |\Sigma^{1/2}\tau|_2$ for any $\tau\in\R^p$.
    Define $\vbeta^*$ by \eqref{eq:persistence-def-oracle} and
    $\sigma$ by \eqref{eq:persistence-def-sigma}.
    Assume that the diagonal elements of $\Sigma$ are no larger than 1.
    Then, there exists absolute constants $c_0,c_1,c_2,c_3>0$ such that  the estimator $\hbeta$ defined in 
    \eqref{eq:persistence-def-hbeta}
    satisfies
    \begin{equation}
        R(f_\hbeta) \le R(f_{\vbeta^*}) + \max(s_n^2(c_1), r_n^2(c_2)),
    \end{equation}
    with probability at least $1-6\exp(-c_4 n \min(c_2,s_n(c_1)))$.
\end{thm}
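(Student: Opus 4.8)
The plan is to deduce \Cref{cor:anisotropy} as a direct specialization of the general learning theorem for subgaussian convex classes (Theorem A of \citet{lecue2013learning}) to the linear class at hand. That result governs empirical risk minimization over any convex subgaussian class, and expresses the excess risk in terms of two fixed-point parameters built from the localized complexity of the shifted class $\mathcal F - f_{\vbeta^*}$. Accordingly, the work is not to reprove their theorem but to (i) check that the present linear class meets the convexity and subgaussianity hypotheses, (ii) identify the oracle and noise level with the objects in their statement, and (iii) show that their abstract fixed points coincide, up to the absolute constants $c_1,c_2$, with the explicit quantities $r_n(\gamma)$ and $s_n(\gamma)$ of \eqref{eq:def-r-n} and \eqref{eq:def-s-n}.

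First I would record the structural facts. The class $\{f_\vbeta:\onenorm{\vbeta}\le R\}$ is the image of the convex $\ell_1$-ball under a linear map, hence convex, so ERM over it is covered by the general theorem. The assumption $\|X_i^T\tau\|_{\psi_2}\le 10|\Sigma^{1/2}\tau|_2$ is exactly the requirement that the class be subgaussian for the intrinsic metric $\tau\mapsto|\Sigma^{1/2}\tau|_2$ with an absolute constant, which is the hypothesis feeding their chaining bounds. The oracle $\vbeta^*$ of \eqref{eq:persistence-def-oracle} is the risk minimizer over the class, matching their $f^*$, and $\sigma=\|Y-X^T\vbeta^*\|_{\psi_2}$ of \eqref{eq:persistence-def-sigma} is precisely the multiplier scale that enters their second fixed point.

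The crux is the translation of the complexity parameters. In the general theorem the two fixed points are governed by the localized complexity of $\mathcal F - f_{\vbeta^*}$ intersected with an $L_2(P)$-ball controlling $\E[(f_\vbeta-f_{\vbeta^*})(X)^2]$, and under the subgaussian hypothesis this complexity is measured by the canonical Gaussian process indexed by the class. Two observations align this with \eqref{eq:def-r-n} and \eqref{eq:def-s-n}. First, any $f_\vbeta-f_{\vbeta^*}$ with $\onenorm{\vbeta}\le R$ equals $f_{\vbeta-\vbeta^*}$ with $\onenorm{\vbeta-\vbeta^*}\le 2R$, so the shifted localized class sits inside $\{f_\vbeta:\onenorm{\vbeta}\le 2R\}$, which explains the constraint $\onenorm{\vbeta}\le 2R$. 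Second, for the linear class the population geometry is literally Gaussian: $\E[(X^T\vbeta)^2]=\vbeta^T\Sigma\vbeta=\E[(G^T\vbeta)^2]$ for $G\sim N(\vzero,\Sigma)$, so the $L_2(P)$-ball coincides with $\{\E[(G^T\vbeta)^2]\le s^2\}$ and the canonical Gaussian process becomes $\vbeta\mapsto\vbeta^T G$. With these identifications, $r_n(\gamma)$ and $s_n(\gamma)$ are exactly the fixed points of the general theorem.

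The main obstacle, which is precisely what their Theorem A resolves internally, is that the quantity literally controlled along the proof is the empirical multiplier process $\vbeta\mapsto n^{-1/2}\sum_{i=1}^n \xi_i X_i^T\vbeta$ on the localized class, with $\xi_i=Y_i-X_i^T\vbeta^*$, rather than the Gaussian process above; the subgaussian design assumption together with the subgaussian residual scale $\sigma$ is what allows this empirical process to be dominated, via generic chaining, by $\vbeta\mapsto\vbeta^T G$ up to an absolute constant. Granting that domination, I would invoke Theorem A verbatim: the absolute constants $c_1,c_2$ absorb the chaining constants, yielding $R(f_\hbeta)\le R(f_{\vbeta^*})+\max(s_n^2(c_1),r_n^2(c_2))$, and the confidence $1-6\exp(-c_4 n\min(c_2,s_n(c_1)))$ is read off from their deviation inequality. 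I expect the only delicate bookkeeping to be tracking how the localization radius, the subgaussian constant $10$, and the scale $\sigma$ propagate into the two fixed-point equations.
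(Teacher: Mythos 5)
Your proposal follows the same route as the paper: the paper offers no independent proof of \Cref{cor:anisotropy}, presenting it simply as Theorem A of \citet{lecue2013learning} restated in the present setting, and your argument---checking convexity and subgaussianity of the linear class, identifying $\vbeta^*$ and $\sigma$ with their oracle and multiplier scale, and translating the abstract fixed points into $r_n(\gamma)$ and $s_n(\gamma)$ of \eqref{eq:def-r-n} and \eqref{eq:def-s-n}---is exactly that restatement made explicit, with the multiplier-to-Gaussian domination correctly delegated to the cited theorem's internal machinery.
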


In the isotropic case ($\Sigma = I_M$),
\cite{mendelson2014learning} proves that
\begin{equation}
    \label{eq:upper-b}
r_n^2(\gamma)\le 
\begin{cases}
    \frac{c_3R^2}{n} \log(\frac{c_3 M}{n}) &\text{if } n \le c_4 M\\
    0  &\text{if } n> c_4 M,
\end{cases}
\end{equation}
for some constants $c_3,c_4>0$ that only depends on $\gamma$,
while
\begin{equation}
    \label{eq:upper-a}
s_n^2(\gamma)\le 
\begin{cases}
    \frac{c_5R\sigma}{\sqrt n} \sqrt{\log(\frac{c_5 M \sigma }{\sqrt n R})} &\text{if } n \le c_6 \sigma^2 M^2 / R^2\\
    \frac{c_5 \sigma^2 M}{n}   &\text{if } n> c_6 \sigma^2 M^2 / R^2,
\end{cases}
\end{equation}
for some constants $c_5,c_6>0$ that only depend on $\gamma$.

Using \Cref{prop:gw-simple} and \Cref{eq:def-that-maurey} above lets us
extend these bounds to the anisotropic case where $\Sigma$ is not proportional to the identity matrix.

\begin{prop}
    \label{prop:anisotropy}
    Let $R>0$,
    let $G \sim N(\vzero,\Sigma)$ and assume that the diagonal elements of $\Sigma$ are no larger than 1.
    For any $\gamma>0$, define $r_n(\gamma)$ and $s_n(\gamma)$
    by \eqref{eq:def-s-n} and \eqref{eq:def-r-n}.
    Then for any $\gamma>0$, there exists constants $c_3,c_4,c_5,c_6>0$ that depend only on $\gamma$ such that
    \eqref{eq:upper-a} and \eqref{eq:upper-b} hold.
\end{prop}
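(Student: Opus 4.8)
The plan is to reduce the two Gaussian complexities defining $r_n(\gamma)$ and $s_n(\gamma)$ to the localized Gaussian width of a single $M$-convex hull, apply \Cref{prop:gw-simple}, and then read off the rates \eqref{eq:upper-a} and \eqref{eq:upper-b} by solving the resulting fixed-point inequalities. First I would write $G = \Sigma^{1/2}\vg$ with $\vg\sim N(\vzero,I_M)$ and set $\vmu_j = \Sigma^{1/2}\ve_j$, so that $\euclidnorms{\vmu_j} = \Sigma_{jj}\le 1$ by hypothesis. Putting $\vmu_\vbeta = \Sigma^{1/2}\vbeta = \sum_{j=1}^M\beta_j\vmu_j$, one has $\vbeta^T G = \vmu_\vbeta^T\vg$ and $\E[(G^T\vbeta)^2] = \euclidnorms{\vmu_\vbeta}$, while the image of the $\ell_1$-ball $\{\onenorm{\vbeta}\le 2R\}$ under $\Sigma^{1/2}$ is exactly the convex hull $T$ of the $2M$ points $\{\pm 2R\vmu_1,\dots,\pm 2R\vmu_M\}\subset\RM$. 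Consequently the expected supremum appearing in \eqref{eq:def-r-n} and \eqref{eq:def-s-n} equals $\ell(T\cap\rho B_2)$, with $\rho=r$ in the definition of $r_n(\gamma)$ and $\rho=s$ in that of $s_n(\gamma)$. Since $\euclidnorm{2R\vmu_j}\le 2R$, the rescaled hull $T/(2R)$ is the convex hull of $2M$ points lying in $B_2$.

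The second step is to apply \Cref{prop:gw-simple} to $T/(2R)$, a convex hull of $2M$ points in ambient dimension $M$, at radius $\rho/(2R)$. Using $M\wedge 2M = M$ and $4e\cdot 2M = 8eM$, and multiplying the resulting bound by $2R$, I obtain for every $\rho>0$
\begin{equation}
    \ell(T\cap\rho B_2)
    \le
    \Big(8R\sqrt{\log_+\big(8eM\,((\rho/2R)^2\wedge 1)\big)}\Big)
    \wedge
    \big(\rho\sqrt{M}\big).
\end{equation}
This is the anisotropic analogue of the isotropic width estimate of \citet{mendelson2014learning}; the diagonal bound $\Sigma_{jj}\le 1$ is precisely what places the extreme points in $B_2$ after rescaling, so no genuine isotropy is used.

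The third step is to insert this bound into the defining inequalities. For $r_n(\gamma)$ the requirement is $\ell(T\cap rB_2)\le\gamma r\sqrt n$: the linear branch $\rho\sqrt M$ gives $r\sqrt M\le\gamma r\sqrt n$ for all $r$ as soon as $M\le\gamma^2 n$, so $r_n(\gamma)=0$ and the second line of \eqref{eq:upper-b} holds with $c_4=1/\gamma^2$; in the complementary regime the logarithmic branch shows that $r^2 = c_3 R^2\log(c_3 M/n)/n$ satisfies the inequality once $c_3$ is large enough (depending on $\gamma$), giving the first line. For $s_n(\gamma)$ the requirement is $\ell(T\cap sB_2)\le\gamma s^2\sqrt n/\sigma$: the linear branch forces $s\ge\sigma\sqrt M/(\gamma\sqrt n)$ and yields the rate $c_5\sigma^2 M/n$ with $c_5=1/\gamma^2$ in the regime $n>c_6\sigma^2 M^2/R^2$, while the logarithmic branch yields $s^2 = c_5 R\sigma\sqrt{\log(c_5 M\sigma/(\sqrt n R))}/\sqrt n$ in the complementary regime, matching \eqref{eq:upper-a}. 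The thresholds $n\asymp M$ and $n\asymp\sigma^2M^2/R^2$ are exactly the points at which the argument of $\log_+$ drops to order one and the two branches swap dominance.

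The main obstacle is the last step: verifying that the proposed closed-form values of $r_n^2$ and $s_n^2$ genuinely satisfy the fixed-point inequalities, and that the branch dominance changes exactly at the claimed thresholds. As in the proof of \Cref{prop:fixed-point-kappa}, this reduces to absorbing the lower-order $\log$ and $\log\log$ terms produced when the candidate radius is substituted into the logarithmic branch of the width estimate; choosing $c_3,c_5$ large enough as functions of $\gamma$ (and $c_4,c_6$ accordingly) makes these corrections negligible. I would carry out this bookkeeping separately for $r_n$ and $s_n$, which is routine but is where all the constant-chasing lives.
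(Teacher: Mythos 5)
Your proposal is correct and follows essentially the same route as the paper: reduce both complexities to the localized Gaussian width of the convex hull of $\{\pm 2R\,\Sigma^{1/2}\ve_j\}$ via $G=\Sigma^{1/2}\vg$, apply \Cref{prop:gw-simple} (Cauchy--Schwarz giving the linear branch, the log branch giving the other), and solve the fixed-point inequalities with constant-chasing as in \Cref{prop:fixed-point-kappa}. Your bookkeeping is in fact slightly cleaner than the paper's (you correctly take the columns of $\Sigma^{1/2}$, not $\Sigma$, and track the $2M$ points in the $\log_+$ argument), and you defer exactly the same ``simple algebra'' that the paper also leaves to the reader.
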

The proof of \Cref{prop:anisotropy} will be given at the end of this subsection.
The primary improvement of \Cref{prop:gw-simple} over previous results is that this result is agnostic to the underlying covariance structure.
This lets us handle the anisotropic case with $\Sigma\ne I_M$ in the above proposition.

\Cref{prop:anisotropy} combined with \Cref{cor:anisotropy} lets us obtained the minimax rate of estimation for the persistence problem
in the anisotropic case. Although the minimax rate was previously obtained in the isotropic case,
we are not aware of a previous result that yields this rate for general covariance matrices $\Sigma\ne I_M$.

\begin{proof}[Proof of \Cref{prop:anisotropy}]
    In this proof, $c>0$ is an absolute constant whose value may change from line to line.
    Let $\gamma>0$. We first bound $r_n(\gamma)$ from above.
    Let $r>0$ and define
    \begin{equation}
        T_r(R) = \{ \vbeta \in \R^p: |\vbeta|_1\le 2R, \vbeta^T\Sigma\vbeta \le r^2 \}.
    \end{equation}
    The random variable $X\sim N(\vzero,\Sigma)$ has the same distribution
    as $\Sigma^{1/2}\vg$ where $\vg\sim N(\vzero,I_M)$.
    Thus, the expectation inside the infimum in \eqref{eq:def-r-n} is equal to
    \begin{equation}
        \E
        \sup_{\vbeta\in T_r(R)} \vbeta^T \Sigma^{1/2}\vg.
        \label{eqijofeq}
    \end{equation}
    To bound $r_n(\gamma)$ from above, it is enough to find some $r>0$ such that
    \eqref{eqijofeq} is bounded from above by $\gamma r \sqrt n$.

    By the Cauchy-Schwarz inequality, the right hand side is bounded from
    above by $r \sqrt M$, which is smaller than $\gamma r \sqrt n$
    for all small enough $s>0$ provided that $n>c_4 M$
    for some constant $c_4$ that only depends on $\gamma$.

    We now bound $r_n(\gamma)$ from above in the regime $n \le c_4 M$.
    Let $\vu_1,...,\vu_M$ be the columns of $\Sigma$
    and let $\tilde T$ be the convex hull of the $2M$ points $\{\pm \vu_1, ..., \pm\vu_M\}$.
    Using the fact that $T_r(R) = 2R T_{r/(2R)}(1) \subset 2R(\tilde T\cap (r/(2R))B_2)$,
    the right hand side of the previous display is bounded from above by
    \begin{equation}
        2 R \; \ell(\tilde T\cap(r/R))B_2)
        \le
        8 R 
        \sqrt{\log_+(4eM(\tfrac{r}{2R})^2)},
        \label{eqlfewji}
    \end{equation}
    where we used \Cref{prop:gw-simple} for the last inequality.
    By simple algebra, one can show that if $r = c_3(\gamma) \frac{R}{\sqrt n}\sqrt{\log(c_3(\gamma) M /n )}$
    for some large enough constant $c_3(\gamma)$ that only depends on $\gamma$,
    then
    the right hand side of \eqref{eqlfewji}
    is bounded from above by $\gamma r \sqrt n$.

    We now bound $s_n(\gamma)$ from above. 
    Let $s>0$. By definition of $s_n(\gamma)$,
    to prove that $s_n(\gamma) \le s$, it is enough to show that
    \begin{equation}
    \sigma
        \E_\xi
        \sup_{\vbeta\in T_s(R)} \vbeta^T \Sigma^{1/2}\vg
    \end{equation}
    is smaller than $\gamma s^2 \sqrt n$.
    We use \Cref{prop:gw-simple} to show that the right hand side of the previous display is bounded from above by
    \begin{equation}
        c\sigma \min\left(
        s \sqrt M,
        R \sqrt{\log_+(4eM(\tfrac{s}{2R})^2)}
        \right).
    \end{equation}
    By simple algebra very similar to that of the proof of \Cref{prop:fixed-point-kappa},
    we obtain that if $s^2$ equals the right hand side of \eqref{eq:upper-a}
    for large enough $c_5=c_5(\gamma)$ and $c_6=c_6(\gamma)$,
    then the right hand side of the previous display is bounded from above
    by $\gamma s^2 \sqrt n$. This completes the proof of \eqref{eq:upper-a}.
\end{proof}


\section{Bounded empirical processes and density estimation \label{s:bounded}}

We now prove a result similar to \Cref{prop:gw-simple} for bounded empirical processes indexed by the convex hull of $M$ points.
This will be useful to study the convex aggregation problem for density
estimation.
Throughout the paper, $\epsilon_1,...,\epsilon_n$ are i.i.d. Rademacher random variables that are independent of all
other random variables.

\begin{prop}
    \label{prop:lambdastar2}
    There exists an absolute constant $c > 0$ such that the following holds.
    Let $M\ge2,n\ge1$ be integers and let $b_{},R,L>0$ be real numbers.
    Let $Q(\vtheta) = \vtheta^T\Sigma\vtheta$ for some semi-positive matrix $\Sigma$.
    Let $Z_1,...,Z_n$ be i.i.d. random variables valued in some measurable set $\mathcal Z$.
    Let $h_1,...,h_M:\mathcal Z \rightarrow \R$ be measurable functions.
    Let $h_\vtheta = \sum_{j=1}^M\theta_j h_j$ for all $\vtheta=(\theta_1,...,\theta_M)^T\in\RM$.
    Assume that almost surely
    \begin{equation}
        |h_j(Z_1)|\le b_{},
        \qquad
        Q(\ve_j) = \Sigma_{jj} \le R^2,
        \qquad
        \E [h_\vtheta^2(Z_1)] \le L Q(\vtheta),
        \label{eq:assumptions-bounded-Q-b-R}
    \end{equation}
    for all $j=1,...,M$ and all $\vtheta\in\simplex$.
    Then for all $r>0$ such that $R/\sqrt M \le r \le R$ we have
    \begin{equation}
        \E\left[ \sup_{\vtheta\in\simplex:\; Q(\vtheta) \le r^2} F(\vtheta) \right]
        \le
        c  \max\left(
            \sqrt L R  \sqrt{\frac{ \log(eMr^2/R^2)}{n}}
            ,
            \frac{b_{} R^2 \log(eMr^2/R^2)}{r^2n}
        \right),
        \label{eq:rhs-lambdastar2}
    \end{equation}
    where $F(\vtheta) = \frac 1 n |\sum_{i=1}^n \epsilon_i h_\vtheta(Z_i)|$
    for all $\vtheta\in\RM$.
\end{prop}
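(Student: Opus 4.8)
The plan is to run the refined Maurey argument of \Cref{lemma:maureysup} pathwise and then control the resulting finite maximum by a Bernstein-type maximal inequality, the boundedness of the $h_j$ replacing the Gaussianity used in \Cref{prop:gw-simple}. First observe that for every fixed realization of $(\epsilon_i,Z_i)_{i=1}^n$ the function $F(\vtheta)=\frac1n|\sum_{i=1}^n\epsilon_i h_\vtheta(Z_i)|$ is a nonnegative convex function of $\vtheta$, since $h_\vtheta$ is linear in $\vtheta$ and the absolute value of a linear map is convex; moreover, for $\vtheta\in\simplex$ the triangle inequality and $|h_j(Z_i)|\le b_{}$ give $|h_\vtheta(Z_i)|\le\sum_j\theta_j|h_j(Z_i)|\le b_{}$ almost surely. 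Hence \Cref{lemma:maureysup} applies pathwise with $t=r$ and any integer $m\ge1$, and taking expectation over $(\epsilon,Z)$ and using Tonelli's theorem yields
\[
\E\Big[\sup_{\vtheta\in\simplex:\,Q(\vtheta)\le r^2}F(\vtheta)\Big]\le\int_1^{+\infty}\E\Big[\max_{\vtheta\in\simplex_m:\,Q(\vtheta)\le xE}F(\vtheta)\Big]\frac{dx}{x^2},\qquad E:=r^2+R^2/m.
\]
I would choose $m=\lfloor R^2/r^2\rfloor$. The hypothesis $R/\sqrt M\le r\le R$ forces $1\le R^2/r^2\le M$ and hence $1\le m\le M$, so the cardinality bound \eqref{eq:bound-cardinal} is available, and a one-line computation gives $2r^2\le E<3r^2$, i.e. $E\asymp r^2$.

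For each fixed $x$, the index set $\{\vtheta\in\simplex_m:\,Q(\vtheta)\le xE\}$ is \emph{deterministic} (since $Q$ depends only on $\Sigma$, not on the data) and has at most $|\simplex_m|$ elements. The core step is to bound the expected maximum of the bounded empirical process over this finite set. For each such $\vtheta$, the summands $W_{\vtheta,i}:=\epsilon_i h_\vtheta(Z_i)$ are i.i.d. in $i$, centered (because $\epsilon_i$ is independent and mean zero), bounded by $b_{}$, and of variance $\E[h_\vtheta^2(Z_1)]\le L\,Q(\vtheta)\le LxE$. A Bernstein maximal inequality (integrating the Bernstein tail against a union bound over the at most $|\simplex_m|$ indices) then gives, for an absolute constant $C$,
\[
\E\Big[\max_{\vtheta\in\simplex_m:\,Q(\vtheta)\le xE}F(\vtheta)\Big]\le C\left(\sqrt{\frac{LxE\,\log(e|\simplex_m|)}{n}}+\frac{b_{}\,\log(e|\simplex_m|)}{n}\right).
\]

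Finally I would substitute this into the integral and integrate in $x$ using $\int_1^{+\infty}x^{-3/2}dx=2$ and $\int_1^{+\infty}x^{-2}dx=1$, which removes all $x$-dependence and produces a subgaussian term proportional to $\sqrt{LE\,\log(e|\simplex_m|)/n}$ and a sub-exponential term proportional to $b_{}\log(e|\simplex_m|)/n$. Inserting $E\asymp r^2$ together with the bound $\log|\simplex_m|\le m\log(2eM/m)\lesssim (R^2/r^2)\log(eMr^2/R^2)$ (using $Mr^2/R^2\ge1$ to absorb the constant factors into the logarithm) collapses these two terms into $\sqrt L\,R\sqrt{\log(eMr^2/R^2)/n}$ and $b_{}R^2\log(eMr^2/R^2)/(r^2n)$ respectively, and bounding their sum by twice their maximum gives \eqref{eq:rhs-lambdastar2}. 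The main obstacle is precisely this Bernstein maximal inequality: boundedness rather than Gaussianity forces a sub-exponential tail, which is exactly what generates the second $b_{}\log/n$ term in the maximum, and one must carry the $x$-dependent variance proxy $L\,Q(\vtheta)\le LxE$ correctly through the Maurey integral so that it contributes the right power $x^{1/2}$ and integrates to a finite constant. The remaining points — convexity and boundedness of $F$ on the simplex, and the bookkeeping $1\le m\le M$ and $E\asymp r^2$ — are routine.
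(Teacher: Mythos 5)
Your proposal is correct and follows essentially the same route as the paper's own proof: a pathwise application of \Cref{lemma:maureysup} with $m=\lfloor R^2/r^2\rfloor$ followed by Fubini, a Bernstein-type maximal inequality for the finite class $\simplex_m$ (the paper cites \cite[Theorem 3.5]{koltchinskii2011oracle} for exactly this bound), integration in $x$, and the cardinality bound \eqref{eq:bound-cardinal} together with $m\asymp R^2/r^2$ and $Mr^2/R^2\ge 1$ to absorb constants into the logarithm. Your explicit bookkeeping ($2r^2\le E\le 3r^2$, the $\sqrt{x}$ dependence integrating against $x^{-2}$) matches the paper's "monotonicity argument" step, so there is nothing to add.
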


\begin{proof}[Proof of \Cref{prop:lambdastar2}]
Let $m = \lfloor R^2/r^2 \rfloor \ge 1$.
The function $F$ is convex since it can be written as the maximum of two linear functions.
Applying \Cref{lemma:maureysup} with the above notation
and $t = r$
yields
\begin{equation}
    \E \sup_{\vtheta\in\simplex: Q(\vtheta)\le r^2} 
    F(\vtheta)
        \le
        \E
        \int_1^{+\infty}
        M(x)
        \frac{dx}{x^2}
        =
        \int_1^{+\infty}
        \E
        \left[
            M(x)
        \right] \frac{dx}{x^2}.
    \label{eq:afewA}
\end{equation}
where the second inequality is a consequence of Fubini's Theorem and for all $x\ge1$,
$$
M(x) = 
            \max_{\vtheta\in\simplex_m:\; Q(\vtheta) \le x(r^2 + R^2/m)}
            F(\vtheta)
.
$$
Using \eqref{eq:assumptions-bounded-Q-b-R} and the Rademacher complexity bound for finite classes
given in \cite[Theorem 3.5]{koltchinskii2011oracle},
we obtain that for all $x\ge1$,
\begin{align}
    \E [M(x)]
    \le c' \max\left(
        \sqrt{  \frac{L x(r^2+R^2/m)\log |\simplex_m|}{n}}
    ,
    \frac{b_{} \log|\simplex_m|}{n}
    \right),
    \label{eq:afewB}
\end{align}
where $c'>0$ is a numerical constant
and $|\simplex_m|$ is the cardinal of the set $\simplex_m$.
By definition of $m$ we have $r^2\le R^2/m$.
The cardinal $|\simplex_m|$ of the set $\simplex_m$
is bounded from above by the right hand side of 
\eqref{eq:bound-cardinal}.
Combining inequality \eqref{eq:afewA},
inequality \eqref{eq:afewB},
the fact that the integrals 
$
        \int_1^{+\infty}
        \frac{dx}{x^2}$
and
$
        \int_1^{+\infty}
        \frac{dx}{x^{3/2}}$
are finite, we obtain
\begin{equation}
    \E \sup_{\vtheta\in\simplex: Q(\vtheta)\le r^2} 
    F(\vtheta)
        \le
        c'' \max\left(
            \sqrt L  R \sqrt{\frac{\log(eM/m)}{n}},
            \frac{b_{} m \log(eM/m)}{n}
        \right)
\end{equation}
for some absolute constant $c''>0$.
By definition of $m$, we have $R^2/(2r^2)\le m\le R^2/r^2$.
A monotonicity argument completes the proof.
\end{proof}

Next, we show that
\Cref{prop:lambdastar2}
can be used to derive a condition
similar to
\eqref{eq:gaussian-width} for bounded empirical processes.
To bound from above the performance of ERM procedures in density estimation,
\Cref{thm:iso-density} in the appendix requires the existence of a quantity $r_*>0$
such that
\begin{equation}
    \E\left[ \sup_{\vtheta\in\simplex:\; Q(\vtheta) \le r_*^2} F(\vtheta) \right]
    \le \frac{r_*^2}{16},
    \label{eq:fixedpoint-KRC}
\end{equation}
where $F$ is the function defined in \Cref{prop:lambdastar2} above.

To obtain such quantity $r_*>0$ under the assumptions of \Cref{prop:lambdastar2},
we proceed as follows.
Let $K = \max(b_{},\sqrt{L})$
and
assume that
\begin{equation}
    M K > R \sqrt n.
\end{equation}
Define 
$r^2 = C K R  \sqrt{\log(eM K / (R\sqrt n))}$
where $C\ge1$ is a numerical constant that will be chosen later.
We now bound from above the right hand side of
\eqref{eq:rhs-lambdastar2}.
We have
\begin{align}
    \log(eMr^2/R^2)
    &\le
    \log(C) + \log(eM K/(R \sqrt n)) + (1/2) \log\log(eM K/(R \sqrt n)) 
    \\
    &\le
    (\log(C) + 3/2) \log(eM K/(R \sqrt n)),
    \label{eq:bound-log+3/2}
\end{align}
where for the last inequality we used that $\log\log(u)\le\log u$
for all $u>1$ and that
$\log(C) \le \log(C) \log(eM K/(R \sqrt n))$,
since $C\ge 1$ and $MK/(R\sqrt n)\ge 1$.
Thus, 
the right hand side of 
\eqref{eq:rhs-lambdastar2} is bounded from above
by
\begin{equation}
    c
    \max\left(
        \frac{\sqrt{\log(C)+3/2}}{C},
        \frac{\log(C)+3/2}{C^2}
    \right)
    r^2.
\end{equation}
It is clear that the above quantity is bounded from above by $r^2/16$ if
the numerical constant $C$ is large enough.
Thus we have proved that 
as long as $M K > R \sqrt n$,
inequality \eqref{eq:fixedpoint-KRC} holds for
\begin{equation}
    r_*^2 = C R K \sqrt { \frac{\log(eMK/(R\sqrt n))}{n} },
\end{equation}
where $C\ge1$ is a numerical constant.

\subsection*{ERM and convex aggregation in density estimation \label{s:convex}}
The minimax optimal rate for the convex aggregation problem is known
to be of order
\begin{equation}
    \label{eq:phiconvex}
    \phi_M^C(n) \coloneqq  \min\left( \frac{M}{n},\sqrt{\frac{\log\left(\frac{eM}{\sqrt{n}}\right)}{n}} \right)
\end{equation}
for regression with fixed design 
\cite{rigollet2011exponential}
and regression with random design
\cite{tsybakov2003optimal}
if the integers $M$ and $\sqrt n$ satisfy
$eM\sigma\le R\sqrt n \exp(\sqrt n)$ or equivalently $\phi_M^C(n)\le 1$.
The arguments for the convex aggregation lower bound
from \cite{tsybakov2003optimal}
can be readily applied to density estimation,
showing that the rate
$\phi_M^C(n)$ is a lower bound on the optimal rate of convex aggregation for density estimation. 

We now use the results of the previous sections to show that ERM
is optimal for the convex aggregation problem
in regression with fixed design, regression with random design and density estimation.

\begin{thm}
    There exists an absolute constant $c>0$ such that the following holds.
    Let $(\mathcal Z,\mu)$ be a measurable space with measure $\mu$.
    Let $p_0$ be an unknown density with respect to the measure $\mu$.
    Let $Z_1, ..., Z_n$ be i.i.d. random variables valued
    in $\mathcal Z$ with density $p_0$.
    Let $p_1,...,p_M\in L_2(\mu)$ and let $p_\vtheta = \sum_{j=1}^M\theta_j p_j$
    for all $\vtheta=(\theta_1,...,\theta_M)^T\in\RM$.
    Let 
    \begin{equation}
        \htheta \in \argmin_{\vtheta\in\simplex} \left( \int p_\vtheta^2 d\mu - \frac{2}{n} \sum_{i=1}^n  p_\vtheta(Z_i) \right).
    \end{equation}
    Then for all $x>0$, with probability greater than $1-\exp(-x)$,
    \begin{equation}
        \int (p_\htheta - p_0)^2d\mu
        \le
        \min_{\vtheta\in\simplex}\int (p_\vtheta - p_0)^2d\mu
        +
        c \max\left(
            \frac{b_\infty M}{n},
            R \sqrt {b_\infty}
            \sqrt{
                \frac{\log(\frac{eM \sqrt{b_\infty}}{R\sqrt n})}{n}
            }
        \right)
        + \frac{88 b_\infty x}{3n}
        ,
    \end{equation}
    where 
    $R^2 = \frac 1 4 \max_{j=1,...,M}\int p_j^2 d\mu$ and
    $b_\infty = \max_{j=0,1,...,M}\musupnorm{p_j}$.
\end{thm}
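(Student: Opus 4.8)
The plan is to read the ERM criterion as the unbiased estimate of the $L_2(\mu)$ risk and then invoke the isomorphic machinery of \Cref{thm:iso-density}, whose sole input is the fixed-point condition \eqref{eq:fixedpoint-KRC}. Since $\E[p_\vtheta(Z_1)] = \int p_\vtheta p_0\,d\mu$, the population version of $\int p_\vtheta^2\,d\mu - \tfrac 2 n\sum_i p_\vtheta(Z_i)$ equals $\int(p_\vtheta - p_0)^2\,d\mu - \int p_0^2\,d\mu$, so minimizing the empirical criterion over $\simplex$ targets the $L_2(\mu)$ risk. Let $\vtheta^*\in\argmin_{\vtheta\in\simplex}\int(p_\vtheta - p_0)^2\,d\mu$ be the oracle and $p^* = p_{\vtheta^*}$ the $L_2(\mu)$-projection of $p_0$ onto the (convex) class $\{p_\vtheta:\vtheta\in\simplex\}$. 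The variational inequality $\int(p_0 - p^*)(p_\vtheta - p^*)\,d\mu\le 0$ yields the curvature bound $\int(p_\vtheta - p_0)^2\,d\mu - \int(p^* - p_0)^2\,d\mu \ge \munorms{p_\vtheta - p^*}$ for all $\vtheta\in\simplex$, which is the margin relation consumed by \Cref{thm:iso-density}. It therefore suffices to exhibit an admissible $r_*$ satisfying \eqref{eq:fixedpoint-KRC} and read off the resulting rate.

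To produce $r_*$ I would apply \Cref{prop:lambdastar2} to the \emph{centered} dictionary $h_j = p_j - p^*$ with the quadratic form $Q(\vtheta) = \munorms{h_\vtheta}$, whose matrix is the $L_2(\mu)$ Gram matrix $\Sigma_{jk} = \int h_j h_k\,d\mu$. Because $\sum_j\theta_j = 1$ on the simplex, $h_\vtheta = p_\vtheta - p^*$, so $Q(\vtheta)\le r^2$ is exactly the localization $\munorms{p_\vtheta - p^*}\le r^2$ around the oracle. The three hypotheses \eqref{eq:assumptions-bounded-Q-b-R} are then checked directly: boundedness $|h_j(Z_1)|\le \musupnorm{p_j} + \musupnorm{p^*}\le 2b_\infty$ (so $b = 2b_\infty$); the diagonal bound $\Sigma_{jj} = \munorms{p_j - p^*}\le (2\max_k\munorm{p_k})^2 = 16R^2$ (so one works with $R'\asymp R$); and the variance bound $\E[h_\vtheta^2(Z_1)] = \int h_\vtheta^2 p_0\,d\mu \le \musupnorm{p_0}\,Q(\vtheta)\le b_\infty Q(\vtheta)$ (so $L = b_\infty$). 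With these constants, the two terms of \eqref{eq:rhs-lambdastar2} feed the fixed-point equation $\E\sup F \le r_*^2/16$, and a computation identical in spirit to the one carried out just before this subsection produces the aggregation rate $r_*^2\asymp R\sqrt{b_\infty}\sqrt{\log(eM\sqrt{b_\infty}/(R\sqrt n))/n}$, valid on the admissibility window $R'/\sqrt M\le r_*\le R'$.

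The remaining $b_\infty M/n$ rate comes, as in the proof of \Cref{thm:fixed-convex-agg}, from a cruder bound on the same Rademacher complexity over the whole $M$-dimensional span of $\{h_j\}$: controlling the complexity of an $M$-dimensional class of variance at most $b_\infty r^2$ by its dimension gives $\E\sup F\lesssim r\sqrt{b_\infty M/n}$, whose fixed point is $r_*^2\asymp b_\infty M/n$. Since each of these two quantities is individually an admissible $r_*^2$ for \eqref{eq:fixedpoint-KRC}, the condition holds for their minimum, which is in turn bounded by the displayed maximum; feeding this $r_*^2$ into \Cref{thm:iso-density} yields the oracle inequality, with the additive deviation term $\tfrac{88 b_\infty x}{3n}$ arising from the boundedness-driven (Bernstein/Talagrand-type) concentration step internal to that theorem.

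The main obstacle is the regime bookkeeping that stitches the two bounds together: one must verify that whenever the aggregation rate lies in the window $R'/\sqrt M\le r_*\le R'$ demanded by \Cref{prop:lambdastar2} it is the operative value, that the complementary small-$M$ regime is covered by the full-span estimate producing $b_\infty M/n$, and that after replacing $R'$ by its $\asymp R$ value the logarithmic arguments coincide up to the absolute constant $c$. The conceptual ingredients—the unbiased-risk reformulation, the convex-projection curvature, and the reduction to \Cref{prop:lambdastar2} via the centered dictionary $h_j = p_j - p^*$—are routine once that centering is adopted.
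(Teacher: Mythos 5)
Your proposal follows essentially the same route as the paper's proof: invoke \Cref{thm:iso-density}, obtain the fixed point in the large-$M$ regime from \Cref{prop:lambdastar2} applied to the dictionary centered at the projection $p_0^*$ (with $Q(\vtheta)=\int(p_\vtheta-p_0^*)^2d\mu$, $L=b_\infty$, $b\asymp b_\infty$), and obtain the $b_\infty M/n$ fixed point in the complementary regime from the dimension-based bound, which is exactly \Cref{lemma:finite-d-density}. The ``regime bookkeeping'' you flag as the main obstacle is handled in the paper precisely as you suggest, by splitting into the cases $M\sqrt{b_\infty}\le R\sqrt n$ and $M\sqrt{b_\infty}> R\sqrt n$, so that in each regime one of the two candidate fixed points is admissible and is bounded by the displayed maximum.
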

\begin{proof}
    It is a direct application of \Cref{thm:iso-density} in the appendix.
    If $M \sqrt{b_\infty}\le R\sqrt n$,
    a fixed point $t_*$ is given by \Cref{lemma:finite-d-density}.
    If $M \sqrt{b_\infty} > R\sqrt n$,
    we use 
    \Cref{prop:lambdastar2} with $Q(\vtheta) = \int (p_0^* - p_\vtheta)^2$,
    $L=b_\infty$ and $b=b_\infty$.
    The bound 
    \eqref{eq:fixedpoint-KRC}
    yields the existence of a fixed point $t_*$ in this regime.
\end{proof}

\bibliographystyle{plainnat}
\bibliography{db}

\begin{thebibliography}{35}
\providecommand{\natexlab}[1]{#1}
\providecommand{\url}[1]{\texttt{#1}}
\expandafter\ifx\csname urlstyle\endcsname\relax
  \providecommand{\doi}[1]{doi: #1}\else
  \providecommand{\doi}{doi: \begingroup \urlstyle{rm}\Url}\fi

\bibitem[Audibert and Tsybakov(2007)]{audibert2007fast}
Jean-Yves Audibert and Alexandre~B. Tsybakov.
\newblock Fast learning rates for plug-in classifiers.
\newblock \emph{Ann. Statist.}, 35\penalty0 (2):\penalty0 608--633, 04 2007.
\newblock \doi{10.1214/009053606000001217}.
\newblock URL \url{http://dx.doi.org/10.1214/009053606000001217}.

\bibitem[Bartlett and Mendelson(2006)]{bartlett2006empirical}
Peter~L Bartlett and Shahar Mendelson.
\newblock Empirical minimization.
\newblock \emph{Probability Theory and Related Fields}, 135\penalty0
  (3):\penalty0 311--334, 2006.

\bibitem[Bartlett et~al.(2005)Bartlett, Bousquet, and
  Mendelson]{bartlett2005local}
Peter~L. Bartlett, Olivier Bousquet, and Shahar Mendelson.
\newblock Local rademacher complexities.
\newblock \emph{Ann. Statist.}, 33\penalty0 (4):\penalty0 1497--1537, 08 2005.
\newblock \doi{10.1214/009053605000000282}.
\newblock URL \url{http://dx.doi.org/10.1214/009053605000000282}.

\bibitem[Bartlett et~al.(2012)Bartlett, Mendelson, and Neeman]{bartlett2012l1}
Peter~L Bartlett, Shahar Mendelson, and Joseph Neeman.
\newblock L1-regularized linear regression: persistence and oracle
  inequalities.
\newblock \emph{Probability theory and related fields}, 154\penalty0
  (1-2):\penalty0 193--224, 2012.

\bibitem[Bellec(2017{\natexlab{a}})]{bellec2014affine}
Pierre~C. Bellec.
\newblock Optimal bounds for aggregation of affine estimators.
\newblock \emph{Annals of Statistics, to appear}, 2017{\natexlab{a}}.
\newblock URL \url{https://arxiv.org/pdf/1410.0346v4.pdf}.

\bibitem[Bellec(2017{\natexlab{b}})]{bellec2014optimal}
Pierre~C. Bellec.
\newblock Optimal exponential bounds for aggregation of density estimators.
\newblock \emph{Bernoulli}, 23\penalty0 (1):\penalty0 219--248,
  2017{\natexlab{b}}.
\newblock \doi{10.3150/15-BEJ742}.
\newblock URL \url{http://dx.doi.org/10.3150/15-BEJ742}.

\bibitem[Bellec(2017{\natexlab{c}})]{bellec2015shape}
Pierre~C. Bellec.
\newblock Sharp oracle inequalities for least squares estimators in shape
  restricted regression.
\newblock \emph{Annals of Statistics, to appear}, 2017{\natexlab{c}}.
\newblock URL \url{https://arxiv.org/pdf/1510.08029.pdf}.

\bibitem[Boucheron et~al.(2013)Boucheron, Lugosi, and
  Massart]{boucheron2013concentration}
St{\'e}phane Boucheron, G{\'a}bor Lugosi, and Pascal Massart.
\newblock \emph{Concentration inequalities: A nonasymptotic theory of
  independence}.
\newblock Oxford University Press, 2013.

\bibitem[Chatterjee(2014)]{chatterjee2014new}
Sourav Chatterjee.
\newblock A new perspective on least squares under convex constraint.
\newblock \emph{Ann. Statist.}, 42\penalty0 (6):\penalty0 2340--2381, 12 2014.
\newblock \doi{10.1214/14-AOS1254}.
\newblock URL \url{http://dx.doi.org/10.1214/14-AOS1254}.

\bibitem[Dai et~al.(2012)Dai, Rigollet, and Zhang]{dai2012deviation}
D.~Dai, P.~Rigollet, and T.~Zhang.
\newblock Deviation optimal learning using greedy {Q}-aggregation.
\newblock \emph{The Annals of Statistics}, 40\penalty0 (3):\penalty0
  1878--1905, 2012.

\bibitem[Dalalyan and Tsybakov(2007)]{dalalyan2007aggregation}
Arnak~S. Dalalyan and Alexandre~B. Tsybakov.
\newblock Aggregation by exponential weighting and sharp oracle inequalities.
\newblock In \emph{Learning theory}, volume 4539 of \emph{Lecture Notes in
  Comput. Sci.}, pages 97--111. Springer, Berlin, 2007.
\newblock ISBN 978-3-540-72925-9.
\newblock \doi{10.1007/978-3-540-72927-3_9}.
\newblock URL \url{http://dx.doi.org/10.1007/978-3-540-72927-3_9}.

\bibitem[den Hollander()]{hollander2012probability}
Frank den Hollander.
\newblock Probability theory: The coupling method.

\bibitem[Giraud(2014)]{giraud2014introduction}
Christophe Giraud.
\newblock \emph{Introduction to high-dimensional statistics}, volume 138.
\newblock CRC Press, 2014.

\bibitem[Gordon et~al.(2007)Gordon, Litvak, Mendelson, and
  Pajor]{gordon2007gaussian}
Yehoram Gordon, Alexander~E Litvak, Shahar Mendelson, and Alain Pajor.
\newblock Gaussian averages of interpolated bodies and applications to
  approximate reconstruction.
\newblock \emph{Journal of Approximation Theory}, 149\penalty0 (1):\penalty0
  59--73, 2007.

\bibitem[Greenshtein et~al.(2004)Greenshtein, Ritov,
  et~al.]{greenshtein2004persistence}
Eitan Greenshtein, Ya'Acov Ritov, et~al.
\newblock Persistence in high-dimensional linear predictor selection and the
  virtue of overparametrization.
\newblock \emph{Bernoulli}, 10\penalty0 (6):\penalty0 971--988, 2004.

\bibitem[Juditsky et~al.(2008)Juditsky, Rigollet, and
  Tsybakov]{juditsky2008learning}
Anatoli Juditsky, Philippe Rigollet, and Alexandre~B. Tsybakov.
\newblock Learning by mirror averaging.
\newblock \emph{The Annals of Statistics}, 36\penalty0 (5):\penalty0
  2183--2206, 2008.

\bibitem[Koltchinskii(2006)]{koltchinskii2006local}
Vladimir Koltchinskii.
\newblock Local rademacher complexities and oracle inequalities in risk
  minimization.
\newblock \emph{Ann. Statist.}, 34\penalty0 (6):\penalty0 2593--2656, 12 2006.
\newblock \doi{10.1214/009053606000001019}.
\newblock URL \url{http://dx.doi.org/10.1214/009053606000001019}.

\bibitem[Koltchinskii(2011)]{koltchinskii2011oracle}
Vladimir Koltchinskii.
\newblock \emph{Oracle Inequalities in Empirical Risk Minimization and Sparse
  Recovery Problems: Ecole d’Et{\'e} de Probabilit{\'e}s de Saint-Flour
  XXXVIII-2008}, volume 2033.
\newblock Springer, 2011.

\bibitem[Lecu{\'e}(2013)]{lecue2013empirical}
Guillaume Lecu{\'e}.
\newblock Empirical risk minimization is optimal for the convex aggregation
  problem.
\newblock \emph{Bernoulli}, 19\penalty0 (5B):\penalty0 2153--2166, 11 2013.
\newblock \doi{10.3150/12-BEJ447}.
\newblock URL \url{http://dx.doi.org/10.3150/12-BEJ447}.

\bibitem[Lecu{\'e} and Mendelson(2009)]{lecue2009aggregation}
Guillaume Lecu{\'e} and Shahar Mendelson.
\newblock Aggregation via empirical risk minimization.
\newblock \emph{Probability theory and related fields}, 145\penalty0
  (3-4):\penalty0 591--613, 2009.

\bibitem[Lecu{\'e} and Mendelson(2013)]{lecue2013learning}
Guillaume Lecu{\'e} and Shahar Mendelson.
\newblock Learning subgaussian classes: Upper and minimax bounds.
\newblock \emph{arXiv preprint arXiv:1305.4825}, 2013.

\bibitem[Lecu\'{e} and Rigollet(2014)]{lecue2014optimal}
Guillaume Lecu\'{e} and Philippe Rigollet.
\newblock Optimal learning with {Q}-aggregation.
\newblock \emph{Ann. Statist.}, 42\penalty0 (1):\penalty0 211--224, 2014.

\bibitem[Leung and Barron(2006)]{leung2006information}
Gilbert Leung and Andrew~R. Barron.
\newblock Information theory and mixing least-squares regressions.
\newblock \emph{Information Theory, IEEE Transactions on}, 52\penalty0
  (8):\penalty0 3396--3410, 2006.

\bibitem[Lounici(2007)]{lounici2007generalized}
K~Lounici.
\newblock Generalized mirror averaging and {D}-convex aggregation.
\newblock \emph{Mathematical methods of statistics}, 16\penalty0 (3):\penalty0
  246--259, 2007.

\bibitem[Mendelson(2014)]{mendelson2014learning}
Shahar Mendelson.
\newblock Learning without concentration.
\newblock \emph{arXiv preprint arXiv:1401.0304}, 2014.

\bibitem[Nemirovski(2000)]{nemirovski2000lecture}
Arkadi Nemirovski.
\newblock Topics in non-parametric statistics.
\newblock In \emph{Lectures on probability theory and statistics (Saint-Flour,
  1998)}, volume 1738 of \emph{Lecture Notes in Mathematics}. Springer, Berlin,
  2000.

\bibitem[Pisier(1980)]{pisier1980remarques}
Gilles Pisier.
\newblock Remarques sur un r{\'e}sultat non publi{\'e} de b. maurey.
\newblock \emph{S{\'e}minaire Analyse fonctionnelle}, pages 1--12, 1980.

\bibitem[Plan et~al.(2014)Plan, Vershynin, and Yudovina]{plan2014high}
Yaniv Plan, Roman Vershynin, and Elena Yudovina.
\newblock High-dimensional estimation with geometric constraints.
\newblock \emph{arXiv preprint arXiv:1404.3749}, 2014.

\bibitem[Rigollet and Tsybakov(2007)]{rigollet2007linear}
Ph. Rigollet and A.~B. Tsybakov.
\newblock Linear and convex aggregation of density estimators.
\newblock \emph{Math. Methods Statist.}, 16\penalty0 (3):\penalty0 260--280,
  2007.
\newblock \doi{10.3103/S1066530707030052}.
\newblock URL \url{http://dx.doi.org/10.3103/S1066530707030052}.

\bibitem[Rigollet and Tsybakov(2011)]{rigollet2011exponential}
Philippe Rigollet and Alexandre Tsybakov.
\newblock Exponential screening and optimal rates of sparse estimation.
\newblock \emph{The Annals of Statistics}, 39\penalty0 (2):\penalty0 731--771,
  2011.

\bibitem[Rigollet and Tsybakov(2012)]{rigollet2012sparse}
Philippe Rigollet and Alexandre~B. Tsybakov.
\newblock Sparse estimation by exponential weighting.
\newblock \emph{Statist. Sci.}, 27\penalty0 (4):\penalty0 558--575, 11 2012.
\newblock URL \url{http://dx.doi.org/10.1214/12-STS393}.

\bibitem[Tibshirani(1996)]{tibshirani1996regression}
Robert Tibshirani.
\newblock Regression shrinkage and selection via the lasso.
\newblock \emph{Journal of the Royal Statistical Society. Series B
  (Methodological)}, pages 267--288, 1996.

\bibitem[Tsybakov(2014)]{tsybakov2014aggregation}
A.B. Tsybakov.
\newblock Aggregation and minimax optimality in high-dimensional estimation.
\newblock In \emph{Proceedings of the International Congress of
  Mathematicians}, Seoul, 2014.
\newblock To appear.

\bibitem[Tsybakov(2003)]{tsybakov2003optimal}
Alexandre~B. Tsybakov.
\newblock Optimal rates of aggregation.
\newblock In \emph{Learning Theory and Kernel Machines}, pages 303--313.
  Springer, 2003.

\bibitem[Vershynin(2010)]{vershynin2010introduction}
Roman Vershynin.
\newblock Introduction to the non-asymptotic analysis of random matrices.
\newblock \emph{arXiv preprint arXiv:1011.3027}, 2010.

\end{thebibliography}

\appendix

\section{Proof of the lower bound \eqref{eq:lower-bound-RIP}}
\label{s:proof-lower-RIP}

\begin{proof}[Proof of \Cref{prop:lower-bound-RIP}]
    By the Varshamov-Gilbert extraction lemma \cite[Lemma 2.5]{giraud2014introduction}, there exist a subset $\Omega$ of $\{0,1\}^M$ such that
    \begin{equation}
        |\vomega|_0 = m,
        \qquad
        |\vomega-\vomega'|_0 > m,
        \qquad
        \log |\Omega| \ge (m/2) \log(M/(5m))
    \end{equation}
    for any distinct $\vomega,\vomega'\in\Omega$.

    For each $\vomega\in\Omega$, we define $s(\vomega)\in\{-1,0,1\}^M$, a signed version of $\vomega$, as follows.
    Let $\epsilon_1,...,\epsilon_M$ be $M$ iid Rademacher random variables.
    Then we have
    \begin{equation}
        \E[|\sum_{j=1}^M \omega_j \epsilon_j \vmu_j|_2^2]
        = \sum_{j=1}^M \omega_j |\vmu_j|_2^2 = m.
    \end{equation}
    Hence, there exists some $s(\vomega)\in\{-1,0,1\}^M$ with $|s(\vomega)_j| = \omega_j$ for all $j=1,...,M$ such that $|\vmu_{s(\vomega)}|_2^2 \le m$.

    Define $T_\Omega = \{s^2 \vmu_{s(\vomega)}, \vomega\in\Omega\}$.
Since $s^2=1/m$, each element of $T_\Omega$ is of the form
$(1/m)(\pm\vmu_{j_1}\pm...\pm\vmu_{j_m})$ where $\vmu_{j_1},...,\vmu_{j_m}$
are $m$ distinct elements of $\{\vmu_1,...,\vmu_M\}$,
hence by convexity of $T$ we have $T_\Omega\subset T$.
By definition of $s(\vomega)$, it holds that $T_\Omega\subset s B_2$,
and thus $T_\Omega\subset T \cap sB_2$.
For any two distinct $\vu,\vv\in T_\Omega$,
\begin{equation}
    |\vu- \vv|_2^2 \ge \kappa^2 s^4 \sup_{\vomega,\vomega'} |s(\vomega) - s(\vomega')|_2^2
    > \kappa^2 s^4m = \kappa^2 s^2,
\end{equation}
where the supremum is taken over any two distinct elements of $\Omega$.
By Sudakov's inequality (see for instance \cite[Theorem 13.4]{boucheron2013concentration}) we have
\begin{equation}
    \ell(T\cap sB_2)
    \ge
    \ell(T_\Omega)
    \ge
    (1/2) \kappa s \sqrt{\log \Omega}
    \ge 
    1/(2\sqrt 2)
    \kappa s \sqrt m \sqrt{\log(M/5m)}.
\end{equation}
Since $1/m=s^2$, the right hand side of the previous display is equal to the right hand side of \eqref{eq:lower-bound-RIP} and the proof is complete.
\end{proof}

\section{Local Rademacher complexities and density estimation}
\label{s:iso}

In the last decade emerged a vast literature
on local Rademacher complexities
to study the performance of empirical risk minimizers (ERM)
for general learning problems, cf. \cite{bartlett2005local,bartlett2006empirical,koltchinskii2006local}
and the references therein.
The following result is given in \cite[Theorem 2.1]{bartlett2005local}.
Let $\epsilon_1,...,\epsilon_n$ be independent Rademacher random variables,
that are independent from all other random variables considered in the paper.
\begin{thm}[\citet{bartlett2005local}]
    \label{thm:bartlett}
    Let $Z_1,...,Z_n$ be i.i.d. random variables valued in some measurable space $\mathcal Z$.
    Let $\mathcal H:\mathcal Z \rightarrow [-b_\infty,b_\infty]$ be a class of measurable functions.
    Assume that there is some $v>0$ such that $\E [h(Z_1)^2] \le v$ for all $h\in \mathcal H$.
    Then for all $x>0$, with probability greater than $1-\exp(-x)$,
    \begin{equation}
        \sup_{h\in\mathcal H}
        (P -P_n)h
        \le
        4 \E\left[ \sup_{h\in \mathcal H} \frac 1 n \sum_{i=1}^n \epsilon_i h(Z_i) \right]
        + \sqrt{\frac{2vx}{n}}
        +  \frac{8b_\infty x}{3n}.
    \end{equation}
\end{thm}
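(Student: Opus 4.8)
The final statement is Theorem~2.1 of \citet{bartlett2005local}, a cited result; the plan below reconstructs its proof by combining a Talagrand-type concentration inequality with symmetrization. Write $W = \sup_{h\in\mathcal H}(P-P_n)h$, where $(P-P_n)h = \E[h(Z_1)] - \tfrac 1 n\sum_{i=1}^n h(Z_i)$. After the routine reduction to a countable subclass (so that $W$ is measurable and the empirical process is well defined), I would apply Bousquet's version of Talagrand's concentration inequality, cf.\ \cite{boucheron2013concentration}, to the centered process $h\mapsto\sum_{i=1}^n(\E[h(Z_1)] - h(Z_i))$. Using the hypotheses $\E[h(Z_1)^2]\le v$ and $\|h\|_\infty\le b_\infty$, this gives, for every $x>0$, with probability at least $1-e^{-x}$,
\[
    W \le \E W + \sqrt{\frac{2x\,(v + 2 b_\infty\,\E W)}{n}} + \frac{b_\infty x}{3n}.
\]

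The second step decouples the square root from the expectation term in order to expose the $\sqrt{2vx/n}$ contribution. Applying $\sqrt{a+b}\le\sqrt a+\sqrt b$ to the middle term and then the elementary inequality $2\sqrt{uw}\le u+w$ with $u=\E W$ and $w=b_\infty x/n$, I would obtain
\[
    \sqrt{\frac{2x(v+2b_\infty\E W)}{n}} \le \sqrt{\frac{2vx}{n}} + 2\sqrt{\frac{b_\infty x\,\E W}{n}} \le \sqrt{\frac{2vx}{n}} + \E W + \frac{b_\infty x}{n},
\]
and substituting back gives $W \le 2\,\E W + \sqrt{2vx/n} + \tfrac{4 b_\infty x}{3n}$.

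Finally, the symmetrization inequality bounds the expected supremum of the empirical process by its Rademacher complexity, $\E W \le 2\,\E[\sup_{h\in\mathcal H}\tfrac 1 n\sum_{i=1}^n\epsilon_i h(Z_i)]$. Substituting this into the previous display and using $\tfrac{4 b_\infty x}{3n}\le\tfrac{8 b_\infty x}{3n}$ yields exactly the claimed bound. I expect the main difficulty to be bookkeeping rather than conceptual: invoking Bousquet's inequality with the sharp constants (variance parameter $v$ and range $b_\infty$, rather than the centered range $2b_\infty$) so that the factor $\sqrt 2$ in front of $vx/n$ and the factor $8/3$ in front of $b_\infty x/n$ emerge as stated, together with the measurability/separability reduction needed to apply Talagrand's inequality to the possibly uncountable class $\mathcal H$.
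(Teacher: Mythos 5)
Your overall route---Bousquet's form of Talagrand's concentration inequality for the centered empirical process, peeling $\E W$ out of the square root via $\sqrt{a+b}\le\sqrt a+\sqrt b$ and AM--GM, then one-sided symmetrization---is exactly the proof of Theorem~2.1 in the cited source \citet{bartlett2005local}; the paper itself gives no proof at all (it only remarks that the statement is a straightforward consequence of Talagrand's inequality), so your reconstruction is the intended argument.

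There is, however, one step that is not justified as written: your first display invokes Bousquet's inequality with range parameter $b_\infty$. Bousquet's inequality applies to the class of \emph{centered} functions $f_h(z)=\E[h(Z_1)]-h(z)$, and what it requires is a bound on $f_h$, not on $h$; for $h$ valued in $[-b_\infty,b_\infty]$ the best one can say in general is $f_h\le 2b_\infty$ (take $h$ equal to $b_\infty$ off a set of small probability and $-b_\infty$ on it: then $\sup_z f_h(z)$ is nearly $2b_\infty$). There is no standard ``sharp-constant'' version with the uncentered envelope $b_\infty$ appearing both inside the square root and in the linear term, and your closing remark correctly flags this as the crux but does not resolve it. The good news is that no such sharp version is needed, because the slack you inserted at the end ($4/3\to 8/3$) is exactly the factor lost by using the correct parameter. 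Indeed, writing $W=\sup_{h\in\mathcal H}(P-P_n)h$ and applying Bousquet with range $2b_\infty$ and variance proxy $\operatorname{Var}(f_h(Z_1))=\operatorname{Var}(h(Z_1))\le \E[h(Z_1)^2]\le v$, one gets with probability at least $1-e^{-x}$,
\begin{equation}
    W \le \E W + \sqrt{\frac{2x\left(v + 4 b_\infty \E W\right)}{n}} + \frac{2 b_\infty x}{3n},
\end{equation}
and then $\sqrt{8 b_\infty x \E W/n} = 2\sqrt{\E W\cdot 2b_\infty x/n}\le \E W + 2 b_\infty x/n$ gives
\begin{equation}
    W \le 2 \E W + \sqrt{\frac{2 v x}{n}} + \left(2+\tfrac{2}{3}\right)\frac{b_\infty x}{n}
    = 2\E W + \sqrt{\frac{2vx}{n}} + \frac{8 b_\infty x}{3 n}.
\end{equation}
Combining with the symmetrization bound $\E W \le 2\,\E\bigl[\sup_{h\in\mathcal H}\tfrac 1 n\sum_{i=1}^n\epsilon_i h(Z_i)\bigr]$ recovers the stated inequality with the constants $4$, $\sqrt{2}$ and $8/3$ exactly, with no slack left over. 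So your proof is repaired simply by replacing $b_\infty$ with $2b_\infty$ in the concentration step and keeping everything else verbatim.
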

\Cref{thm:bartlett} is a straightforward consequence
of Talagrand inequality.
We now explain how \Cref{thm:bartlett} can be used to derive sharp oracle inequalities
in density estimation.

\begin{thm}
    \label{thm:iso-density}
    Let $(\mathcal Z,\mu)$ be a measurable space with measure $\mu$.
    Let $p_0$ be an unknown density with respect to the measure $\mu$.
    Let $Z_1, ..., Z_n$ be i.i.d. random variables valued
    in $\mathcal Z$ with density $p_0$.
    Let $\mathcal P$ be a convex subset of $L^2(\mu)$.
    Assume that there exists $p_0^*\in\mathcal P$ such that
    $\int (p_0-p_0^*)^2d\mu = \inf_{p\in\mathcal P} \int (p_0-p)^2d\mu$.
    Assume that for some $t_*>0$,
    \begin{equation}
        \label{eq:lambdastar-density}
        \E\left[  
                \sup_{p\in\mathcal P:\; \int (p-p_0^*)^2d\mu \le t_*^2}
                \frac 1 n \sum_{i=1}^n \epsilon_i (p-p_0^*)(Z_i)
        \right]
        \le \frac{t_*^2}{16}.
    \end{equation}
    Assume that there exists an estimator $\hat p$ such that almost surely,
    \begin{equation}
        \hat p \in \argmin_{p\in\mathcal P} \left( \int p^2 d\mu - \frac{1}{n} \sum_{i=1}^n  2 p(Z_i) \right).
    \end{equation}
    Then for all $x>0$, with probability greater than $1-\exp(-x)$,
    \begin{equation}
        \int (\hat p - p_0)^2d\mu
        \le
        \min_{p\in\mathcal P}\int (p - p_0)^2d\mu
        + 
        2 \max\left(t_*^2, \frac{4(\musupnorm{p_0}+8b_\infty/3)x}{n}\right),
    \end{equation}
    where $b_\infty = \sup_{p\in\mathcal P}\musupnorm{p}$.
\end{thm}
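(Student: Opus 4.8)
The plan is to run the isomorphic (local Rademacher) argument, exploiting convexity of $\mathcal P$ together with the fact that $p_0^*$ is the $L_2(\mu)$-projection of $p_0$ onto $\mathcal P$. Write $Pg = \E[g(Z_1)]$ and $P_n g = \frac 1 n \sum_{i=1}^n g(Z_i)$. Since $p_0$ is a density, $\int p\,p_0\,d\mu = Pp$, so $\int p^2 d\mu - 2P_n p = \munorms{p - p_0} - \int p_0^2\,d\mu + 2(P-P_n)p$ for every $p$. Hence the optimality $\int \hat p^2 d\mu - 2 P_n \hat p \le \int (p_0^*)^2 d\mu - 2P_n p_0^*$ rearranges into the basic inequality $\mathcal E \le 2(P_n - P)(\hat p - p_0^*)$, where $\mathcal E := \munorms{\hat p - p_0} - \munorms{p_0^* - p_0}$ is the excess risk. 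First I would record the \emph{strong convexity} coming from the projection: because $p_0^*$ minimizes $\munorm{\cdot - p_0}$ over the convex set $\mathcal P$, the variational inequality $\int (p_0 - p_0^*)(p - p_0^*)\,d\mu \le 0$ holds for all $p \in \mathcal P$, so that $\munorms{p - p_0} - \munorms{p_0^* - p_0} \ge \munorms{p - p_0^*}$ for every $p \in \mathcal P$. Writing $\rho := \munorm{\hat p - p_0^*}$, this yields $\rho^2 \le \mathcal E \le 2(P_n - P)(\hat p - p_0^*)$.

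Next I would localize the empirical process on the right. Since $\mathcal P$ is convex and contains $p_0^*$, the set $\{p - p_0^* : p \in \mathcal P\}$ is star-shaped about the origin, so the Rademacher complexity
\[
\psi(\delta) := \E\Big[ \sup_{p\in\mathcal P:\ \munorms{p-p_0^*}\le \delta^2} \tfrac 1 n \sum_{i=1}^n \epsilon_i (p - p_0^*)(Z_i) \Big]
\]
obeys the sub-root scaling $\psi(\delta) \le (\delta/t_*)\,\psi(t_*)$ for $\delta \ge t_*$: rescaling any admissible $p$ to $p_0^* + (t_*/\munorm{p-p_0^*})(p - p_0^*) \in \mathcal P$ lands it on the radius-$t_*$ sphere while dividing the functional by $\munorm{p-p_0^*}/t_*$. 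Combined with the hypothesis \eqref{eq:lambdastar-density}, this gives $\psi(\delta) \le \delta t_*/16 \le \delta^2/16$ for all $\delta \ge t_*$, so the single fixed-point assumption at $t_*$ propagates to every larger radius. The same segment rescaling shows that for every $p \in \mathcal P$, $(P_n - P)(p - p_0^*) \le \max(1, \munorm{p-p_0^*}/\delta)\, W_\delta$, where $W_\delta := \sup_{p:\ \munorms{p-p_0^*}\le \delta^2}(P_n - P)(p - p_0^*)$.

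The deviation of $W_\delta$ is then controlled by \Cref{thm:bartlett}, applied to the class $\{\, p_0^* - p : p \in \mathcal P,\ \munorms{p - p_0^*}\le \delta^2 \,\}$, for which $\sup_h (P - P_n)h = W_\delta$. Its members are bounded in sup-norm by $2 b_\infty$, and since $Z_1 \sim p_0$ their variance is $\E[(p-p_0^*)^2(Z_1)] = \int (p - p_0^*)^2 p_0\,d\mu \le \musupnorm{p_0}\,\munorms{p - p_0^*} \le \musupnorm{p_0}\,\delta^2$; by symmetry of the Rademacher signs the class has Rademacher complexity $\psi(\delta)\le \delta^2/16$. \Cref{thm:bartlett} then gives, with probability at least $1-e^{-x}$,
\[
W_\delta \ \le\ 4\cdot\tfrac{\delta^2}{16} + \sqrt{\tfrac{2\,\musupnorm{p_0}\,\delta^2\,x}{n}} + \tfrac{8\cdot 2b_\infty\, x}{3n} .
\]

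Finally I would close the argument by a self-bounding step. Take $\delta^2 = \max\!\big(t_*^2,\ \tfrac{c(\musupnorm{p_0}+b_\infty)x}{n}\big) \ge t_*^2$; the $1/16$ calibration in \eqref{eq:lambdastar-density} makes the Rademacher term exactly $\delta^2/4$, and for $c$ a large enough absolute constant the variance and boundedness terms are each at most $\delta^2/8$, so $W_\delta \le \delta^2/2$ on the event. A bootstrap then pins the radius of $\hat p$: if $\rho > \delta$, the rescaling bound gives $\mathcal E \le 2(\rho/\delta)W_\delta \le \rho\delta$, which together with $\rho^2 \le \mathcal E$ forces $\rho \le \delta$, a contradiction; hence $\rho \le \delta$, so $\hat p$ lies in the radius-$\delta$ level set and $\mathcal E \le 2 W_\delta \le \delta^2$. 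To sharpen the constants to the announced $4$ and $8b_\infty/3$, one replaces the crude step $W_\delta \le \delta^2/2$ by a variance-aware (normalized, or peeled) application of \Cref{thm:bartlett} whose deviation scales with $\munorm{p-p_0^*}$, giving the additive isomorphic inequality $2(P_n-P)(p-p_0^*) \le \tfrac12\munorms{p-p_0^*} + \tfrac12\delta^2$ for all $p$, and hence $\mathcal E \le \tfrac12\mathcal E + \tfrac12\delta^2$, i.e. $\mathcal E \le \delta^2 \le 2\delta^2$ as claimed. The main obstacle is precisely this combination step: the ERM $\hat p$ has a random $L_2(\mu)$-radius, so one must use convexity both to propagate the fixed-point bound to all radii and to absorb the variance contribution $\sqrt{2\musupnorm{p_0}\delta^2 x/n}$ into the strong-convexity margin $\tfrac12\rho^2$ rather than into $\delta^2$, thereby keeping the final deviation linear in $x$; the projection inequality and the remaining arithmetic–geometric bookkeeping are routine.
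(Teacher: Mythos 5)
Your proposal has the right architecture, and most of its steps coincide with the paper's proof: the basic inequality from optimality of $\hat p$, the star-shaped rescaling that propagates the fixed-point condition \eqref{eq:lambdastar-density} from $t_*$ to any larger radius, the variance bound $\E[(p-p_0^*)^2(Z_1)]\le\musupnorm{p_0}\munorms{p-p_0^*}$, and the application of \Cref{thm:bartlett} to the localized class are all exactly what the paper does. The one genuine divergence is the margin you use to close the argument. You invoke the \emph{population} projection inequality $\munorms{\hat p-p_0^*}\le\mathcal E$ and then run a bootstrap contradiction, which forces you to certify the strong requirement $W_\delta\le\delta^2/2$ (you need $2(\rho/\delta)W_\delta\le\rho\delta$ for the contradiction to fire). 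The paper instead uses strong convexity of the \emph{empirical} objective $p\mapsto\munorms{p}-\frac2n\sum_{i}p(Z_i)$ around its minimizer $\hat p$ (the strong-convexity lemma in the source, giving $H(\hat p)\le H(p)-\munorms{p-\hat p}$ for $H(p)=\munorms{p}-L(p)$ with $L$ linear). This puts the quadratic margin directly into the basic inequality, $\mathcal E\le 2\Xi_{\hat p}$ with $\Xi_p=(P_n-P)(p-p_0^*)-\frac12\munorms{p_0^*-p}$, and then only the weaker certificate $W_\rho\le\rho$ is needed: inside the ball the negative quadratic is simply dropped, and outside the ball the peeled bound $(P_n-P)(p-p_0^*)\le\rho/2+\munorms{p_0^*-p}/2$ is cancelled exactly by the margin.

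This difference is not cosmetic: it is why your constants cannot match the stated ones. At the paper's radius $\rho=4(\musupnorm{p_0}+8b_\infty/3)x/n$, the bound \Cref{thm:bartlett} yields is at least $\rho/4+2\sqrt2\,\musupnorm{p_0}x/n+16b_\infty x/(3n)$, which exceeds $\rho/2$; so $W_\rho\le\rho$ is certifiable but $W_\rho\le\rho^{}/2$ is not, and your bootstrap only closes once $\delta^2\ge c(\musupnorm{p_0}+b_\infty)x/n$ for an absolute constant $c$ of order several tens. Hence your argument proves the theorem only up to a worse absolute constant in the $x/n$ term. The final ``sharpening'' paragraph does not repair this: the additive inequality $2(P_n-P)(p-p_0^*)\le\frac12\munorms{p-p_0^*}+\frac12\delta^2$ is itself derived from $W_\delta\le\delta^2/2$, so the bottleneck is unchanged — you have only moved where the factor of two is spent, not removed the requirement. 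The fix is to replace the projection margin by the empirical strong-convexity margin (the empirical objective has quadratic part $\munorms{p}$, so the margin comes for free from optimality of $\hat p$); with that substitution the requirement relaxes to $W_\rho\le\rho$, the arithmetic closes at $\rho=\max\bigl(t_*^2,\,4(\musupnorm{p_0}+8b_\infty/3)x/n\bigr)$, and your proof becomes the paper's.
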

\begin{proof}[Proof of \Cref{thm:iso-density}]
    By optimality of $\hat p$ we have
    \begin{equation}
        \int (\hat p - p_0)^2d\mu \le
        \int (p_0^* - p_0)^2d\mu + 2\Xi_{\hat p}.
    \end{equation}
    where for all $p\in\mathcal P$, $\Xi_p$ is the random variable
    \begin{equation}
        \Xi_p = (P-P_n)(p - p_0^*) - \frac 1 2 \int (p_0^* - p)^2 d\mu.
    \end{equation}
    Let $\rho = \max\left(t_*^2, 4(\musupnorm{p_0}+8b_\infty/3)x/n\right)$ and define
    \begin{equation}
        \mathcal H = \left\{h = p_0^* - p \text{ for some } p\in\mathcal P \text{ such that } \int h^2d\mu \le \rho \right\}.
    \end{equation}
    The class $\mathcal H$ is convex, $0\in\mathcal H$ and $t_*^2\le\rho$
    so that $h\in\mathcal H$ implies $\frac{t_*^2}{\rho} h\in \mathcal H$.
    For any linear form $L$,
    \begin{equation}
        \frac 1 \rho
        \sup_{h\in \mathcal H: \int h^2 d\mu \le \rho} L(h),
        \le
        \frac 1 {t_*^2}
        \sup_{h\in \mathcal H: \int h^2 d\mu \le \rho} L\left(\frac{t_*^2}{\rho} h \right)
        \le
        \frac 1 {t_*^2} \sup_{h\in \mathcal H: \int h^2 d\mu \le t_*^2} L(h)
    \end{equation}
    so that by taking expectations, \eqref{eq:lambdastar-density} holds if $t_*^2$ is replaced by $\rho$.

    For any $h\in\mathcal H$, $\E[h(Z_1)^2] \le \musupnorm{p_0} \rho$
    and $h$ is valued in $[-2b_\infty, 2b_\infty]$ $\mu$-almost surely.
    We apply \Cref{thm:bartlett} to the class $\mathcal H$.
    This yields that with probability greater than $1-e^{-x}$,
    if $p\in\mathcal P$ is such that $p_0^8-p\in\mathcal H$, then
    \begin{align}
        (P-P_n)(p_0^* - p)
        &\le \frac{\rho}{4}
        + 
        \sqrt \frac{2\rho\musupnorm{p_0}x}{n}
        + \frac{16b_\infty x}{n}, \\
        &\le
        \frac \rho 2
        + 2\left( \musupnorm{p_0} + 8b_\infty/3 \right)\frac x n
        \le \rho
        .
    \end{align}
    On the same event of probability greater than $1-e^{-x}$,
    if $p\in\mathcal P$ is such that $\int(p_0^* - p)^2d\mu > \rho$,
    consider $h =\sqrt \rho (p_0^* - p) / \sqrt{\int(p_0^*-p)^2d\mu}$ which belongs to $\mathcal H$.
    We have $(P-P_n)h \le \rho$, which can be rewritten 
    \begin{equation}
        (P-P_n)(p_0^* - p) \le \sqrt \rho \sqrt{\int(p_0^* - p)^2d\mu} \le \rho/2 + \int(p_0^* - p)d\mu / 2,
    \end{equation}
    so that $\Xi_p \le \rho/2 \le \rho$.
    In summary, we have proved that on an event of probability greater than $1-e^{-x}$,
    $\sup_{p\in\mathcal P} \Xi_p \le \rho$.
    In particular, this holds for $p=\hat p$ which completes the proof.
\end{proof}

\section{A fixed point $t_*$ for finite dimensional classes}

\begin{lemma}
    \label{lemma:finite-d-density}
    Consider the notations of \Cref{thm:iso-density}
    and assume that the linear span of $\mathcal P$
    is finite dimensional of dimension $d$.
    Then \eqref{eq:lambdastar-density}
    is satisfied for $t_*^2 = 256 \musupnorm{p_0} d/n$.
\end{lemma}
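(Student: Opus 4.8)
The plan is to exploit the finite-dimensionality of $\mathcal P$ by passing to an $L_2(\mu)$-orthonormal basis, which turns the supremum into the support function of a Euclidean ball. Since $p_0^*\in\mathcal P$ and $\mathcal P$ lies in its $d$-dimensional linear span $V$, every $h=p-p_0^*$ with $p\in\mathcal P$ also lies in $V$. First I would choose an orthonormal basis $\phi_1,\dots,\phi_d$ of $V$ for the $L_2(\mu)$ inner product and write $h=\sum_{k=1}^d a_k\phi_k$, so that $\munorms{h}=\euclidnorms{a}$. Enlarging the feasible set from $\{p\in\mathcal P\}$ to all of $V$ subject to $\munorms{h}\le t_*^2$ can only increase the supremum, which is acceptable since we only seek an upper bound.

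Under this parametrization,
\begin{equation}
    \frac1n\sum_{i=1}^n\epsilon_i h(Z_i)=\sum_{k=1}^d a_k W_k,\qquad W_k=\frac1n\sum_{i=1}^n\epsilon_i\phi_k(Z_i),
\end{equation}
and the constraint becomes $\euclidnorm{a}\le t_*$. Next I would apply the Cauchy--Schwarz inequality to obtain $\sup_{\euclidnorm{a}\le t_*}\sum_k a_k W_k=t_*\euclidnorm{W}$, reducing the left-hand side of \eqref{eq:lambdastar-density} to $t_*\,\E\euclidnorm{W}$. Jensen's inequality then gives $\E\euclidnorm{W}\le\sqrt{\E\euclidnorms{W}}=\sqrt{\sum_{k=1}^d\E[W_k^2]}$.

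The remaining step is to bound each variance $\E[W_k^2]$. Using the independence of the Rademacher variables from the $Z_i$ together with $\E[\epsilon_i\epsilon_j]=\mathbf 1_{i=j}$, one finds $\E[W_k^2]=\tfrac1n\E[\phi_k(Z_1)^2]=\tfrac1n\int\phi_k^2\, p_0\,d\mu\le\tfrac1n\musupnorm{p_0}$, where the last inequality uses $\int\phi_k^2\,d\mu=1$. Summing over $k$ yields $\sum_k\E[W_k^2]\le d\,\musupnorm{p_0}/n$, so the left-hand side of \eqref{eq:lambdastar-density} is at most $t_*\sqrt{d\,\musupnorm{p_0}/n}$. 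Substituting $t_*^2=256\,\musupnorm{p_0}\,d/n$ makes this quantity exactly $t_*^2/16$, which is the desired inequality. There is no genuine obstacle here; the only points requiring minor care are the bound $\E[\phi_k(Z_1)^2]\le\musupnorm{p_0}$, which is where the density assumption and the $L_2(\mu)$-orthonormality of the basis enter, and the observation that relaxing the index set to the full subspace $V$ preserves the target inequality.
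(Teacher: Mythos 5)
Your proof is correct and follows essentially the same route as the paper's: pass to an $L_2(\mu)$-orthonormal basis of the span, relax the constraint set to the Euclidean ball $\euclidnorm{a}\le t_*$, apply Cauchy--Schwarz and Jensen, and bound each variance by $\musupnorm{p_0}/n$ using that $Z_1$ has density $p_0$ and the basis is orthonormal. The only difference is presentational: you spell out the relaxation step and the variance computation that the paper leaves implicit (its displayed argument even drops the coefficients $\theta_j$ in a typo), so nothing further is needed.
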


\begin{proof}
Let $e_1, ..., e_{d}$ be an orthonormal basis of the linear span of
$\mathcal P$,
for the scalar product $\langle p_1,p_2\rangle = \int p_1 p_2 d\mu$.
Then
\begin{align}
        \E\left[  
                \sup_{p\in\mathcal P:\; \int (p-p_0^*)^2d\mu \le t_*^2}
                \frac 1 n \sum_{i=1}^n \epsilon_i (p-p_0^*)(Z_i)
        \right]
        &\le
        \E
        \sup_{\vtheta\in \R^d :\; \euclidnorms{\vtheta} \le t_*^2}
        \frac 1 n \sum_{i=1}^n \epsilon_i \sum_{j=1}^d e_j(X_i)
        \\
        &\le
        t_*
        \sqrt{
            \sum_{j=1}^d 
            \left(
                \frac 1 n \sum_{i=1}^n \epsilon_i e_j(X_i)
            \right)^2
        }
        \\
        &
        \le \frac{ t_* \sqrt{ \musupnorm{p_0}d}}{\sqrt n}
        = \frac{t_*^2}{16},
\end{align}
where we have used
the Cauchy-Schwarz inequality,
Jensen' inequality,
and that $\mathbb{E} e_j(X)^2 \le \musupnorm{p_0}$ for all $j=1,...,d$.
\end{proof}

\end{document}

\Cref{thm:iso-density,thm:isomorphic}
are similar because in each setting,
the associated empirical process is a linear function of the class members.
To be precise, \Cref{thm:iso-density} involves the linear process
\begin{equation}
    \left( 
        2 (P_n - P)(p - p_0^*)
    \right)_{p\in\mathcal P},
\end{equation}
while \Cref{thm:isomorphic} involves the random variable
\begin{equation}
    \left( 
        2 \vxi^T(\vf - \vf_0^*)
    \right)_{\vf\in K}.
\end{equation}
These processes are linear in the class members.
Nonlinearities appear for the Least Squares estimator
in regression with random design,
which is studied in \Cref{s:unknown-design}.

\section{Proofs}
\label{s:proofs}

\subsection{Strong convexity}
\begin{lemma}
    \label{lemma:strong-convexity}
    Let $(\mathcal Z,\mu)$ be a measurable
    and let $\mathcal P\subset L_2(\mu)$ be a set of measurable functions.
    Let $L:L_2(\mu) \rightarrow \R$ be a linear form
    and let 
    $H(p) = 
            \munorms{p} - L(p)
        $ for all $p\in\mathcal P$.
    Assume that $\mathcal P$ is convex and that
    there exists $\hat p\in\mathcal P$ such that
    $H(\hat p ) =
        \min_{p\in\mathcal P} 
        H(p)$.
    Then for all $p\in\mathcal P$,
    \begin{equation}
        H(\hat p) \le  H(p) - \munorms{p-\hat p}.
    \end{equation}
\end{lemma}
\begin{proof}
    Let $p\in\mathcal P$.
    By convexity, $t p+ (1-t)\hat p \in \mathcal P$
    for all $t\in[0,1]$.
    Let $Q(t) = H(t p+ (1-t)\hat p)$ for all $t\in[0,1]$.
    The function $Q$ is a polynomial function of degree 2,
    its exact Taylor expansion is given by
    \begin{equation}
        Q(t) = Q(0) + t Q'(0) + t^2 \munorms{p - \hat p}.
    \end{equation}
    The polynomial function $Q$ is minimized over $[0,1]$ at $t=0$ so
    that $Q'(0) \ge 0$.
    This yields $Q(1) \ge Q(0) + \munorms{p -\hat p}$
    which completes the proof.
\end{proof}

\end{document}